\documentclass{amsart}

\usepackage{amssymb}

\usepackage{graphicx}



\newtheorem{theorem}{Theorem}[section]
\newtheorem{lemma}[theorem]{Lemma}

\theoremstyle{definition}
\newtheorem{corollary}[theorem]{Corollary}
\newtheorem{proposition}[theorem]{Proposition}
\newtheorem{assumption}[theorem]{Assumption}
\newtheorem{definition}[theorem]{Definition}
\newtheorem{example}[theorem]{Example}

\theoremstyle{remark}

\numberwithin{equation}{section}

\begin{document}

\title[On Maximal Ranges of Vector Measures for Subsets]{On Maximal Ranges of Vector Measures for Subsets and Purification
of Transition Probabilities}


\author{Peng Dai}
\address{Department of Applied Mathematics and Statistics, Stony Brook University, Stony Brook, NY 11794-3600}
\email{Peng.Dai@stonybrook.edu}

\author{Eugene A. Feinberg}
\address{Department of Applied Mathematics and Statistics, Stony Brook University, Stony Brook, NY 11794-3600}
\email{Eugene.Feinberg@sunysb.edu}
\thanks{This research was partially supported by NSF grants CMMI-0900206 and CMMI-0928490.}

\subjclass[2010]{Primary 60A10, 28A10}
\keywords{Lyapunov convexity theorem, maximal subset, purification of transition probabilities.}

\date{\today}


\commby{Richard C. Bradley}

\begin{abstract}
Consider a measurable space with an atomless finite vector
measure. This measure defines a mapping of the $\sigma$-field into
an Euclidean space. According to the Lyapunov convexity theorem,
the range of this mapping is a convex compactum. Similar ranges
are also defined for measurable subsets of the space. Two subsets
with the same vector measure may have different ranges. We
investigate the question whether, among all the subsets having the
same given vector measure, there always exists a set with the
maximal range of the vector measure. The answer to this question
is positive for two-dimensional vector measures and negative for
higher dimensions. We use the existence of maximal ranges to strengthen
the Dvoretzky-Wald-Wolfowitz purification theorem for the case of two measures.
\end{abstract}

\maketitle



%
%
%
%
%
%
%
%
%
%
%
%
%

\section{Introduction}\label{s1}

Let $\left({X},\mathcal{F}\right)$ be a measurable space
and ${\mu}=\left(\mu_{1},...,\mu_{m}\right)$, $m=1,2,\ldots,$
be a finite atomless vector measure on it.  We recall that a measure $\nu$ is
called atomless if for each ${Z}\in\mathcal{F},$ such that
$\nu\left({Z}\right)>0$, there exists
 ${Z}'\in\mathcal{F}$
such that  ${Z}'\subset{Z}$ and
$0<\nu\left({Z}'\right)<\nu\left({Z}\right)$. A vector measure
${\mu}=\left(\mu_{1},...,\mu_{m}\right)$, is called finite and
atomless if each measure $\mu_{i}$, $i=1\dots m$, is finite and
atomless. For each ${Y}\in \mathcal{F}$ consider the range
${R}_{{\mu}}\left({Y}\right)=\left\{
{\mu}\left({Z}\right):{Z}\in\mathcal{F},{Z}\subseteq{Y}\right\} $
of the vector measures of all its measurable subsets $Z$.
According to
 the Lyapunov convexity theorem~\cite{Liapounoff:1940}, the sets
 ${R}_{{\mu}}\left({Y}\right)$ are convex compactums in
 $\mathbb{R}^m.$  For a review on this theorem and its applications see \cite{Olech}.

In this paper we study whether for any $ p\in {R}_{{\mu}}(X)$, the
set of all ranges $\{{R}_{{\mu}}\left({Y}\right): \mu(Y)= p,
Y\in{\mathcal{F}}\}$ contains a maximal element. In other words,
is it always true that for any $ p\in {R}_{{\mu}}(X)$ there exists
a subset $Y^*\in \mathcal{F}$ such that $ \mu(Y^*)= p$ and
${R}_{{\mu}}\left({Y^*}\right)\supseteq
{R}_{{\mu}}\left({Y}\right)$ for any $Y\in \mathcal{F}$ with
$\mu(Y)= p$?  We  show that the answer is positive when $m=2$ and
negative when $m>2$. Furthermore, for $m=2$,  this maximal range
can be constructed by a simple geometric transformation of
${R}_{{\mu}}\left({X}\right).$

In addition to the maximal range, it is possible to consider a
minimal range. For $q\in R_\mu (X)$, the set $M^* \in \mathcal{F}$
with $\mu(M^*)=q$ has minimal range corresponding to $q$ if
$R_\mu(M) \supseteq R_\mu\left( M^* \right)$ for any $M\in
\mathcal{F}$ with $\mu(M)=q$. We show that a set has a maximal
range corresponding to $p$ if and only if its complement has a
minimal range corresponding to $\mu(X)-p$. Therefore, minimal
ranges also exist for dimension two and they may not exist for
higher dimensions.

Lyapunov's theorem is relevant to purification of  transition
probabilities discovered by Dvoretzky, Wald and Wolfowitz
\cite{Dvoretzky:1950, Dvoretzky:1951} for a finite image set. Let
$(A,{\mathcal{A}})$ be a measurable space and $\pi$ be a transition
probability from $X$ to $A$; that is, $\pi(B|x)$ is a measurable
function on $(X,{\mathcal{F}})$ for any $B\in \mathcal{A}$ and $\pi(\cdot|x)$
is a probability measure on $(A,{\mathcal{A}})$ for any $x\in X$.
According to Dvoretzky, Wald and Wolfowitz \cite{Dvoretzky:1950,
Dvoretzky:1951}, two transition probabilities $\pi_1$ and $\pi_2$
are called strongly equivalent if
\begin{equation}\label{eq:SETEQ}
\int_{{X}}\pi_{1}\left(B|x\right)\mu_i\left(dx\right)=
\int_{{X}}\pi_{2}\left(B|x\right)\mu_i\left(dx\right),
\qquad i=1,\dots,m, \quad B\in {\mathcal{A}}.\end{equation}

A transition probability $\pi$ is called pure if each measure
$\pi(\cdot|x)$ is concentrated at one point.  A pure transition
probability $\pi$ is defined by a measurable mapping $\varphi:X\to
A$ such that $\pi(B|x)=I\{\varphi(x)\in B\}$ for all $B\in
\mathcal{A}.$  According to the contemporary terminology, a transition
probability can be purified if for it there exists a strongly
equivalent pure transition probability.

For a finite set $A$, Dvoretzky, Wald and Wolfowitz
\cite{Dvoretzky:1950,Dvoretzky:1951} proved that any transition
probability can be purified (we recall that $\mu$ is atomless).
Edwards \cite[Theorem 4.5]{Edwards:1987} generalized this result
to the case of a countable set $A$. Khan and Rath \cite[Theorem
2]{Khan:2009} gave another proof of this generalization. Loeb and
Sun \cite[Example 2.7]{Loeb:2006} constructed an elegant example
when a transition probability cannot be purified for $m=2$,
$X=[0,1]$, and  $A=[-1,1]$.
However,  purification holds for a countable set of nonatomic,
finite, signed Loeb measures, when $A$ is a complete separable
metric space \cite[Corollary~2.6]{Loeb:2006}.

Note that for a countable (finite or infinite) set $A$, a transition probability $\pi$ can be purified if and only if
there exists a partition $\{Z^a\in {\mathcal{F}}:a\in A\}$ of $X$, such that
\begin{equation} \label{eq:atom}
\int_{{X}}\pi\left(a|x\right)\mu \left(dx\right) = \mu (Z^a),\qquad a\in{A}.
\end{equation}
where $\mu$ is a $m$-dimensional finite atomless vector measure.
Since $\int_{{X}}\pi\left(a|x\right)\mu \left(dx\right)$ are
vectors in $\mathbb{R}^m$, a natural question is: under what
conditions for an arbitrary set of vectors $\left\{p^a:a\in
A\right\}$ there exists a partition $\{Z^a\in {\mathcal{F}}:a\in
A\}$ of $X$  such that $p^a =\mu (Z^a)$ for each $a\in{A}$. We use
the theorem on maximal ranges proved in this paper to show that
for $m=2$, such partition exists if and only if (i)~$\sum_{a\in
A}p^a=\mu(X)$, and (ii)~$\sum_{a\in B}p^a \in R_\mu (X)$ for any
finite subset $B$ of $A$. For $m=2$, the Dvoretzky-Wald-Wolfowitz
theorem for a countable set $A$ \cite{Edwards:1987,Khan:2009}
follows from the sufficient part of this statement.


%
%


We formulate the main results in the following section, prove the
existence of maximal and minimal subsets for $m=2$ in
Section~\ref{s3}, provide counterexamples when $m>2$ in
Section~\ref{s4}, and describe geometric constructions of maximal
ranges in Section~\ref{s5}.  Section~\ref{s6} is devoted to the
proof of the theorem on the existence of a partition.

\section{Main results}\label{s2}

\begin{definition}\label{def:sets}

Given a measurable subset ${Y}$ of the measurable space $\left({X},\mathcal{F}\right)$
with a vector measure ${\mu}$ and a vector ${p}\in{R}_{{\mu}}\left({Y}\right)$, we define

(a) the set of all subsets of ${Y}$ with vector measure
${p}$,
\[
\mathcal{S}^{{p}}_\mu\left({Y}\right)=\left\{
{Z}\in\mathcal{F}_Y:
{\mu}\left({Z}\right)={p}\right\} ,\] where ${\mathcal{
F}}_Y=\{Z\subseteq Y: Z\in{\mathcal{F}}\};$

(b) the union of all the ranges of all subsets of ${Y}$
with the vector measure ${p}$,
\[
{R}^{{p}}_{\mu} \left({Y}\right)=\bigcup \limits
_{{Z}\in\mathcal{S}^{{p}}_\mu\left({Y}\right)}{R}_\mu\left({Z}\right);\]

(c) the intersection of the $R_\mu(Y)$ with its shift
by a vector $- ({ \mu(Y) -p})$,
\[
Q^{{p}}_{\mu}\left({Y}\right)=\left({R}_{{\mu}}\left({Y}\right)-\left\{
{\mu}\left({Y}\right)-{p}\right\}
\right)\cap{R}_{{\mu}}\left({Y}\right),
\]
where
${S}_1-{S}_2=\{ {q}-{r}:{q}\in {S}_1,{r}\in{S}_2\} $ for ${S}_1,{S}_2\in \mathbb{R}^m$. In particular,
${R}_{{\mu}}\left({Y}\right)-\{{r}\}$ is a
parallel shift of ${R}_{{\mu}}\left({Y}\right)$ by
$-{r}$.
\end{definition}

\begin{definition}\label{def:maxset}
For a measurable subset ${Y}\in\mathcal{F}$, the set
${Z}^{*}\in\mathcal{S}^{{p}}_\mu\left({Y}\right)$,
such that
${R}_\mu\left({Z}^{*}\right)={R}^{{p}}_\mu\left({Y}\right)$,
is called the maximal subset of $Y$ with the measure $p$. The set $M^*\in\mathcal{S}^{{q}}_\mu\left({Y}\right)$, such that
${R}_\mu\left({M}^{*}\right) \subseteq {R}_\mu \left({M}\right)$ for any $M\in\mathcal{S}^{{q}}_\mu\left({Y}\right)$, is called the minimal subset of $Y$ with the measure $q$.
\end{definition}

Our first result is the following theorem.

\begin{theorem}\label{thm1}
For a two-dimensional finite atomless vector measure
${\mu}=\left(\mu_{1},\mu_{2}\right)$ and for a vector
${p}\in{R}_{{\mu}}\left({X}\right)$, there exists a maximal set
${Z}^{*}\in\mathcal S_\mu^{{p}}\left({X}\right)$ and, in addition,
${R}^{{p}}_\mu\left({X}\right)=
Q^{{p}}_{\mu}\left({X}\right).$
\end{theorem}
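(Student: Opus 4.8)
The plan is to reduce the theorem to exhibiting a single good set, and then to build that set out of super‑level sets of the density $f=d\mu_{1}/d(\mu_{1}+\mu_{2})$, exploiting that in dimension two these sets form a linearly ordered family.

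First I record the inclusion $R^{p}_{\mu}(X)\subseteq Q^{p}_{\mu}(X)$, valid for every $m$: the complement map $Z\mapsto X\setminus Z$ gives $R_{\mu}(X)=\mu(X)-R_{\mu}(X)$, hence
\[
Q^{p}_{\mu}(X)=\bigl(R_{\mu}(X)-\{\mu(X)-p\}\bigr)\cap R_{\mu}(X)=\bigl(p-R_{\mu}(X)\bigr)\cap R_{\mu}(X)=\{q:q\in R_{\mu}(X),\ p-q\in R_{\mu}(X)\},
\]
and if $\mu(Z)=p$, $W\subseteq Z$, then $\mu(W)\in R_{\mu}(X)$ and $p-\mu(W)=\mu(Z\setminus W)\in R_{\mu}(X)$, so $R_{\mu}(Z)\subseteq Q^{p}_{\mu}(X)$ for every $Z\in\mathcal S^{p}_{\mu}(X)$. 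Thus it suffices to produce one $Z^{*}$ with $\mu(Z^{*})=p$ and $R_{\mu}(Z^{*})\supseteq Q^{p}_{\mu}(X)$: then $Q^{p}_{\mu}(X)\subseteq R_{\mu}(Z^{*})\subseteq R^{p}_{\mu}(X)\subseteq Q^{p}_{\mu}(X)$, so all three sets coincide and $Z^{*}$ is maximal.

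To construct $Z^{*}$, put $\lambda=\mu_{1}+\mu_{2}$ (finite and atomless) and $f=d\mu_{1}/d\lambda\in[0,1]$ $\lambda$-a.e., so $\mu(W)=\bigl(\int_{W}f\,d\lambda,\int_{W}(1-f)\,d\lambda\bigr)$. The super‑level sets $\{f\ge t\}$, $t\in[0,1]$, are linearly ordered by inclusion; $F(t):=\mu(\{f\ge t\})$ is a coordinatewise non‑increasing path from $\mu(X)$ to $0$, and the closure of its graph, together with that of $t\mapsto\mu(X)-F(t)$, gives the two boundary arcs of $R_{\mu}(X)$, carrying all its exposed points. I take $Z^{*}=\{f\le\beta\}\cup\{f\ge\alpha\}$, the union of a lower and an upper tail of $f$, with thresholds $0\le\beta\le\alpha\le1$ chosen so that $\mu(Z^{*})=p$; such $\alpha,\beta$ exist by a continuity/degree argument, since $(\alpha,\beta)\mapsto\mu(\{f\le\beta\}\cup\{f\ge\alpha\})$ maps the triangle $\{\beta\le\alpha\}$ onto $R_{\mu}(X)$, its boundary wrapping once around $\partial R_{\mu}(X)$ via the two arcs above (when the law of $f$ under $\lambda$ has atoms one makes a routine adjustment on one level set, using that $\lambda$ is atomless, to hit $p$ exactly; the degenerate case of proportional $\mu_{1},\mu_{2}$ is handled directly by atomlessness of $\lambda$). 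For $H^{*}(t):=\mu(Z^{*}\cap\{f\ge t\})$ one then has $H^{*}(t)=F(t)$ for $t\ge\alpha$, $H^{*}(t)=F(\alpha)$ for $\beta\le t\le\alpha$, and $H^{*}(t)=F(\alpha)+F(t)-F(\beta)$ for $t\le\beta$; and $\mu(Z^{*})=p$ is precisely the identity $F(\beta)-F(\alpha)=\mu(X)-p$, so the last branch is $F(t)-(\mu(X)-p)$. The zonoid $R_{\mu}(Z^{*})$ is determined by the arc $\{H^{*}(t):t\in[0,1]\}$ and its reflection through $p/2$. On the other side, $Q^{p}_{\mu}(X)=(p-R_{\mu}(X))\cap R_{\mu}(X)$ has one boundary arc equal to the part of $\partial R_{\mu}(X)$ lying in $p-R_{\mu}(X)$, which is exactly $\{F(t):t\ge\alpha\}$, continued by the part of $\partial(p-R_{\mu}(X))$ lying in $R_{\mu}(X)$, which is exactly $\{F(t)-(\mu(X)-p):t\le\beta\}$, the two meeting at $F(\alpha)$; its other arc is the reflection through $p/2$. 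These arcs coincide, so $R_{\mu}(Z^{*})=Q^{p}_{\mu}(X)$, which finishes the proof. (The minimal‑set and duality statements then follow by applying all of this to complements, via $R_{\mu}(X\setminus Z)=\mu(X)-R_{\mu}(Z)$.)

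The step I expect to be the real obstacle is this construction — exhibiting a single $Z^{*}$ of measure exactly $p$ whose range reaches every exposed point of $Q^{p}_{\mu}(X)$, rather than merely sits inside it: one must recognize that the optimal subset is governed by two thresholds on $f$ and then match its boundary arcs with those of $Q^{p}_{\mu}(X)$. What makes $m=2$ work is exactly the linear order on the level sets $\{f\ge t\}$; for $m\ge3$ the analogous level sets are indexed by directions in $\mathbb S^{m-1}$, which are not linearly ordered, and (as the abstract states) maximal ranges can then fail to exist.
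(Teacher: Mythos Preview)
Your approach is essentially the paper's: both construct $Z^{*}$ as the complement of a ``middle strip'' of a density (equivalently, the union of a lower and an upper tail), and both verify $R_{\mu}(Z^{*})=Q^{p}_{\mu}(X)$ by showing that the boundary arc of $R_{\mu}(Z^{*})$, traced by level sets of the density restricted to $Z^{*}$, splits into a piece of $\partial R_{\mu}(X)$ and a piece of $\partial R_{\mu}(X)-(\mu(X)-p)$, meeting at the corner $F(\alpha)$. Two implementation differences are worth noting. First, by working with $f=d\mu_{1}/d(\mu_{1}+\mu_{2})$ you sidestep the paper's detour of assuming $\mu_{1}\sim\mu_{2}$ (its Assumption~3.4) and then undoing that assumption by a positive $2\times 2$ matrix (its Lemma~3.14); your route is cleaner. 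Second, your existence argument for the thresholds $(\alpha,\beta)$ via a two-dimensional degree argument is the one place that needs tightening: the diagonal edge $\{\alpha=\beta\}$ of the parameter triangle collapses to the single point $\mu(X)$, so the claimed ``wrapping once around $\partial R_{\mu}(X)$'' has to be argued with some care (and atoms of the law of $f$ complicate continuity). The paper handles this more directly by a one-parameter intermediate-value argument: it fixes the $\mu_{1}$-width of the removed strip to be $\mu_{1}(X)-p_{1}$ (so that $\mu_{1}(Z^{*})=p_{1}$ automatically) and then slides the strip, showing that $\mu_{2}$ of the strip varies continuously from the lower-boundary value to the upper-boundary value and hence hits $\mu_{2}(X)-p_{2}$. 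That reduction to a single IVT is both simpler and handles atoms transparently via the auxiliary sets $W_{c}(X^{l})$.
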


Theorem \ref{thm1} immediately implies that the set
${R}^{{p}}_\mu\left({X}\right)$, which is the union
of the ranges of ${\mu}$ on ${Z}$, for all
${Z}\in\mathcal{S}^{{p}}_\mu\left({X}\right)$, is a
convex compactum. Furthermore, if ${R}_{{\mu}}\left({X}\right)$ and $ p$ are
given, the set ${R}^{ p}_\mu(X)$ is defined by two simple
geometric operations, a shift and an intersection,  since
$Q^{{p}}_{\mu}\left({X}\right)$ is defined by
these operations.

The following theorem links the notions maximal and minimal subsets.
\begin{theorem}\label{thm1.5}
The set $Z^*$ is the maximal subset of $X$ with the measure $p$, if and only if $M^*=X \setminus Z^*$ is the minimal subset of $X$ with the measure $\mu(X) - p$.
\end{theorem}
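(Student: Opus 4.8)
The plan is to deduce Theorem~\ref{thm1.5} from an order-reversing duality between the range of a measurable set and the range of its complement. Write $q=\mu(X)-p$. Since $\mu(Z)=p$ is equivalent to $\mu(X\setminus Z)=q$, complementation $Z\mapsto X\setminus Z$ is a bijection of $\mathcal S^{p}_\mu(X)$ onto $\mathcal S^{q}_\mu(X)$. By Definition~\ref{def:maxset}, ``$Z$ is the maximal subset of $X$ with measure $p$'' means $Z\in\mathcal S^{p}_\mu(X)$ and $R_\mu(Z)\supseteq R_\mu(W)$ for every $W\in\mathcal S^{p}_\mu(X)$, while ``$X\setminus Z$ is the minimal subset of $X$ with measure $q$'' means $X\setminus Z\in\mathcal S^{q}_\mu(X)$ and $R_\mu(X\setminus Z)\subseteq R_\mu(M)$ for every $M\in\mathcal S^{q}_\mu(X)$. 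So, fixing $Z\in\mathcal S^{p}_\mu(X)$, it suffices to show
\[
R_\mu(Z)\supseteq R_\mu(W)\ \ \forall W\in\mathcal S^{p}_\mu(X)
\quad\Longleftrightarrow\quad
R_\mu(X\setminus Z)\subseteq R_\mu(M)\ \ \forall M\in\mathcal S^{q}_\mu(X).
\]

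The key ingredient I would establish first is the identity
\[
R_\mu(X\setminus W)=R_\mu(X)\ominus R_\mu(W),\qquad W\in\mathcal F,
\]
where $A\ominus B=\{x\in\mathbb R^m:x+B\subseteq A\}=\bigcap_{b\in B}(A-\{b\})$ is the Minkowski difference. I would prove it in two steps. Step one is the elementary set identity $R_\mu(X)=R_\mu(W)\oplus R_\mu(X\setminus W)$, with $\oplus$ the Minkowski sum: any $V\in\mathcal F$ with $V\subseteq X$ is the disjoint union of $V\cap W$ and $V\cap(X\setminus W)$, which gives ``$\subseteq$'', and the union of disjoint measurable sets $V_1\subseteq W$ and $V_2\subseteq X\setminus W$ is a measurable subset of $X$, which gives ``$\supseteq$''. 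Step two is the cancellation rule $(A\oplus B)\ominus A=B$ valid for convex, compact $A,B\subseteq\mathbb R^m$: the inclusion $B\subseteq(A\oplus B)\ominus A$ is immediate, and for the reverse inclusion, if $x+A\subseteq A\oplus B$ but $x\notin B$, one separates $x$ from the convex compact set $B$ by a vector $u$ with $\langle u,x\rangle>\sup_{b\in B}\langle u,b\rangle$, picks $a^{*}\in A$ maximizing $\langle u,\cdot\rangle$ over $A$, writes $x+a^{*}=a'+b'$ with $a'\in A$ and $b'\in B$, and derives the contradiction $\langle u,x\rangle\le\sup_{b\in B}\langle u,b\rangle$. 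The Lyapunov convexity theorem guarantees that $R_\mu(W)$ and $R_\mu(X\setminus W)$ are convex and compact, so step two applies with $A=R_\mu(W)$ and $B=R_\mu(X\setminus W)$.

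It then remains to combine these facts. The map $\Phi(K)=R_\mu(X)\ominus K$ is inclusion-reversing, and, restricted to the family $\{R_\mu(W):W\in\mathcal F\}$, it is an involution: by the identity above $\Phi(R_\mu(W))=R_\mu(X\setminus W)$, and applying the identity to $X\setminus W$ gives $\Phi(R_\mu(X\setminus W))=R_\mu(W)$. Hence, if $R_\mu(Z)\supseteq R_\mu(W)$ for all $W\in\mathcal S^{p}_\mu(X)$, applying $\Phi$ reverses the inclusions and yields $R_\mu(X\setminus Z)\subseteq R_\mu(X\setminus W)$ for all such $W$; since $X\setminus W$ ranges over $\mathcal S^{q}_\mu(X)$, this is exactly the minimality of $X\setminus Z$. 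Conversely, if $R_\mu(X\setminus Z)\subseteq R_\mu(M)$ for all $M\in\mathcal S^{q}_\mu(X)$, applying $\Phi$ gives $R_\mu(Z)\supseteq R_\mu(X\setminus M)$ for all such $M$, i.e.\ $R_\mu(Z)\supseteq R_\mu(W)$ for all $W\in\mathcal S^{p}_\mu(X)$, which is the maximality of $Z$. I expect the only genuine obstacle to be the cancellation rule $(A\oplus B)\ominus A=B$; the Minkowski-sum decomposition and the bookkeeping with $\Phi$ should be routine.
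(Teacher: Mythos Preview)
Your proposal is correct and follows essentially the same route as the paper: both arguments rest on the Minkowski-sum decomposition $R_\mu(X)=R_\mu(W)\oplus R_\mu(X\setminus W)$ together with the cancellation rule $(A\oplus B)\ominus B=A$ for convex compact sets (which the paper cites from Schneider rather than proving via separation). The only difference is packaging: the paper isolates the implication ``$A_1\oplus B_1=A_2\oplus B_2$ and $B_1\subseteq B_2$ imply $A_2\subseteq A_1$'' as a standalone lemma and applies it twice, whereas you phrase the same content as the inclusion-reversing involution $\Phi(K)=R_\mu(X)\ominus K$.
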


We will use Theorem \ref{thm1} to prove the following theorem
that, as shown in Section~\ref{s6}, strengthens the
Dvoretzky-Wald-Wolfowitz purification theorem
\cite{Edwards:1987,Khan:2009} for the case $m=2.$

\begin{theorem}\label{thm2}
Consider a measurable space $\left({X},\mathcal{F}\right)$ with a
two-dimensional finite atomless vector measure ${\mu}$, a
countable set $A$, and a set of two-dimensional vectors
$\left\{p^a:a\in A\right\}$. A partition $\{Z^a\in
{\mathcal{F}}:a\in A\}$ of $X$, with $p^a =\mu (Z^a)$ for all
$a\in{A}$, exists if and only if (i) $\sum_{a \in A}  {p^a} =
\mu(X)$ and (ii) $\sum_{a \in B} {p^a} \in R_\mu (X)$ for any
finite subset $B \subset A$.
\end{theorem}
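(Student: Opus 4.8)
The plan is to settle necessity in a line and to prove sufficiency by a greedy recursion in which, at each stage, a piece with the prescribed measure is removed so as to leave a remainder whose range is as large as possible; the bookkeeping needed to keep the recursion going is exactly condition~(ii). Throughout I would use the (routine, but worth stating) fact that Theorems~\ref{thm1} and~\ref{thm1.5} hold for any measurable space carrying a two-dimensional finite atomless vector measure, in particular for each $(Y,\mathcal{F}_Y)$, $Y\in\mathcal{F}$, with the restriction of $\mu$: the trace $\sigma$-field is a $\sigma$-field, the restriction of an atomless measure is atomless, and $R_\mu(Y)\subseteq R_\mu(X)$. For necessity, given a partition $\{Z^a:a\in A\}$ with $\mu(Z^a)=p^a$, countable additivity of each $\mu_i$ gives $\sum_{a\in A}p^a=\mu\bigl(\bigcup_{a}Z^a\bigr)=\mu(X)$, which is~(i), and for finite $B\subseteq A$ the measurable set $\bigcup_{a\in B}Z^a\subseteq X$ has $\mu$-measure $\sum_{a\in B}p^a$, which is~(ii).

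For sufficiency, assume (i) and (ii) and enumerate $A=\{a_1,a_2,\dots\}$ (a finite list if $A$ is finite). I would build pairwise disjoint sets $Z^{a_1},Z^{a_2},\dots$ and remainders $Y_0=X\supseteq Y_1\supseteq Y_2\supseteq\cdots$ with $Y_n=Y_{n-1}\setminus Z^{a_n}$ and $\mu(Z^{a_n})=p^{a_n}$, preserving the invariant
\[
\sum_{a\in B}p^a\in R_\mu(Y_n)\qquad\text{for every finite }B\subseteq A\setminus\{a_1,\dots,a_n\},
\]
which for $n=0$ is precisely~(ii). Given $Y_{n-1}$ satisfying the invariant, applying it with $B=\{a_n\}$ shows $p^{a_n}\in R_\mu(Y_{n-1})$, so by Theorems~\ref{thm1} and~\ref{thm1.5} on $(Y_{n-1},\mathcal{F}_{Y_{n-1}})$ there is a minimal subset $Z^{a_n}$ of $Y_{n-1}$ with $\mu(Z^{a_n})=p^{a_n}$, whose complement $Y_n$ is then a maximal subset of $Y_{n-1}$ with measure $\mu(Y_{n-1})-p^{a_n}$; hence $R_\mu(Y_n)=Q_\mu^{\mu(Y_{n-1})-p^{a_n}}(Y_{n-1})=\bigl(R_\mu(Y_{n-1})-\{p^{a_n}\}\bigr)\cap R_\mu(Y_{n-1})$.

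To propagate the invariant to $n$: for finite $B\subseteq A\setminus\{a_1,\dots,a_n\}$, both $B$ and $B\cup\{a_n\}$ are finite subsets of $A\setminus\{a_1,\dots,a_{n-1}\}$, so the invariant at level $n-1$ places $\sum_{a\in B}p^a$ and $\sum_{a\in B}p^a+p^{a_n}$ in $R_\mu(Y_{n-1})$; the second membership says $\sum_{a\in B}p^a\in R_\mu(Y_{n-1})-\{p^{a_n}\}$, and intersecting with the first gives $\sum_{a\in B}p^a\in R_\mu(Y_n)$. To assemble the final partition, note the $Z^{a_n}$ are disjoint with $\mu(Z^{a_n})=p^{a_n}$, and by~(i) and countable additivity the leftover $Z_\infty:=X\setminus\bigcup_n Z^{a_n}\in\mathcal{F}$ has $\mu(Z_\infty)=\mu(X)-\sum_n p^{a_n}=0$, so replacing $Z^{a_1}$ by $Z^{a_1}\cup Z_\infty$ (which changes no measure) yields the desired partition; when $A=\{a_1,\dots,a_N\}$ is finite, the same is done with $Z_\infty$ replaced by the final remainder $Y_N$, which has $\mu(Y_N)=0$.

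The step I expect to be the real obstacle is the choice made at each stage. An \emph{arbitrary} measurable $Z^{a_n}\subseteq Y_{n-1}$ with $\mu(Z^{a_n})=p^{a_n}$ could shrink $R_\mu(Y_n)$ so badly that the construction stalls; picking the minimal subset (equivalently, leaving a maximal subset) is what keeps $R_\mu(Y_n)$ equal to the explicit shift-and-intersection set $Q$, and it is exactly this identity — combined with condition~(ii) applied to the enlarged set $B\cup\{a_n\}$ — that makes~(ii) self-reproducing along the recursion. Making this intuition precise is where Theorem~\ref{thm1} (the existence of maximal ranges and the formula $R_\mu^{p}(X)=Q_\mu^{p}(X)$) and Theorem~\ref{thm1.5} (the maximal/minimal duality) are genuinely used.
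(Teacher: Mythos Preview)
Your proof is correct and follows essentially the same approach as the paper's: both establish sufficiency by a greedy recursion that, at each step, removes a piece of the prescribed measure so as to leave a maximal remainder, and both use the identity $R_\mu(Y_n)=Q_\mu^{\mu(Y_{n-1})-p^{a_n}}(Y_{n-1})=(R_\mu(Y_{n-1})-\{p^{a_n}\})\cap R_\mu(Y_{n-1})$ from Theorem~\ref{thm1} to show that condition~(ii) regenerates on the remainder. Your write-up is in fact slightly tidier than the paper's in two respects: you maintain the invariant only for finite $B$ throughout (so you never need the paper's separate closedness argument extending~(ii) to infinite $B$ before the recursion starts), and you explicitly dispose of the measure-zero leftover $Z_\infty$, which the paper's Lemma~\ref{lem:PartialSum} leaves implicit.
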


\section{Maximal and minimal subsets}\label{s3}

In this section, we prove Theorems \ref{thm1} and \ref{thm1.5}. Recall that for a set ${S}\subseteq \mathbb{R}^m$, its reflection across a point ${c}\in \mathbb{R}^m$ is ${\textrm{Ref}}({S},{c})=\{2{c}\}- {S}$.
If ${S}=\{{s}\}$ is a singleton, we shall write ${\textrm{Ref}}({s},{c})$ instead of ${\textrm{Ref}}(\{{s}\},{c})$.
A set ${S} \subseteq \mathbb{R}^{m}$ is called centrally symmetric if $ {\textrm{Ref}}\left({S},{c}\right) = {S} $ for some point ${c}\in \mathbb{R}^m$ called the center of ${S}$.
Any bounded centrally symmetric set has only one center.

In this section we let ${Y}\in\mathcal{F}$ be any measurable subset of $X$. Lemmas \ref{lem:censym}-\ref{lem:censymsub} hold for any finite atomless vector measure ${\mu}=(\mu_1,\ldots,\mu_m)$ on $(X,{\mathcal F})$, where $m=1,2,\ldots\ $.

\begin{lemma}\label{lem:censym}
The set ${{R}_\mu}({Y})$ is centrally symmetric with the center $\frac{1}{2}{\mu}\left({Y}\right)$.
\end{lemma}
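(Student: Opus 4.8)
The plan is to show that the complementation map $Z\mapsto Y\setminus Z$ on the measurable subsets of $Y$ induces the point reflection $q\mapsto\mu(Y)-q$ on the range $R_\mu(Y)$, and that this reflection is precisely $\textrm{Ref}(\cdot,\tfrac12\mu(Y))$. Since a bounded centrally symmetric set has a unique center (as recalled just above), establishing invariance of $R_\mu(Y)$ under this map proves the lemma.

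First I would take an arbitrary $q\in R_\mu(Y)$ and choose, by the definition of the range, a set $Z\in\mathcal{F}$ with $Z\subseteq Y$ and $\mu(Z)=q$. Then $Y\setminus Z$ is again a measurable subset of $Y$, and finite additivity of each component measure $\mu_i$ gives $\mu(Y\setminus Z)=\mu(Y)-\mu(Z)=\mu(Y)-q$, so $\mu(Y)-q\in R_\mu(Y)$. Because $\textrm{Ref}(q,\tfrac12\mu(Y))=\{2\cdot\tfrac12\mu(Y)\}-q=\mu(Y)-q$, this yields the inclusion $\textrm{Ref}(R_\mu(Y),\tfrac12\mu(Y))\subseteq R_\mu(Y)$.

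For the reverse inclusion I would use that point reflection across a fixed center is an involution: applying the inclusion just obtained to the reflected set, or equivalently running the complementation argument starting from $Y\setminus Z$ instead of $Z$, gives $R_\mu(Y)\subseteq\textrm{Ref}(R_\mu(Y),\tfrac12\mu(Y))$. Combining the two inclusions shows $\textrm{Ref}(R_\mu(Y),\tfrac12\mu(Y))=R_\mu(Y)$, i.e. $R_\mu(Y)$ is centrally symmetric with center $\tfrac12\mu(Y)$. The argument uses only finite additivity and closure of $\mathcal{F}$ under complements, not the Lyapunov theorem, atomlessness, or the restriction $m\le2$, which is consistent with the stated generality of the lemma. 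There is essentially no obstacle here; the only point needing a moment's care is matching the complementation identity $\mu(Y\setminus Z)=\mu(Y)-\mu(Z)$ with the definition of $\textrm{Ref}$ and invoking uniqueness of the center so that "the center $\tfrac12\mu(Y)$" is well defined.
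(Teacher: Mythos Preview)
Your argument is correct and is exactly the standard complementation argument that the paper has in mind: the paper's own ``proof'' simply states that the fact is straightforward and refers to Lyapunov's original observation, so your write-up fills in precisely the details the authors left implicit.
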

\begin{proof}
The proof is straightforward, and this fact was observed by Lyapunov \cite[p. 476]{Liapounoff:1940}.
\end{proof}

\begin{lemma}\label{lem:translation}
The equality ${R}_{{\mu}}\left({Y}\right)-\left\{
{\mu}\left({Y}\right)-{p}\right\}
={\textrm{Ref}}\left({R}_{{\mu}}\left({Y}\right),\frac{1}{2}{p}\right)
$ holds for any ${p}\in{R}_{{\mu}}\left({Y}\right)$.
\end{lemma}
\begin{proof}
By Lemma~\ref{lem:censym}, ${R}_{{\mu}}\left({Y}\right)={\textrm{Ref}}\left({R}_{{\mu}}\left({Y}\right)
,\frac{1}{2}{\mu}\left({Y}\right)\right)=\{{\mu}(Y)\}-{R}_{{\mu}}\left({Y}\right)$.
Therefore, ${R}_{{\mu}}\left({Y}\right)-\left\{
{\mu}\left({Y}\right)-{p}\right\}=(\{ {\mu}\left({Y}\right)\}-{R}_{{\mu}}\left({Y}\right))- \left\{
{\mu}\left({Y}\right)-{p}\right\}=\{{{p}}\}-{R}_{{\mu}}\left({Y}\right)={\textrm{Ref}}\left({R}_{{\mu}}\left({Y}\right),\frac{1}{2}{p}\right).$
\end{proof}

\begin{lemma}\label{lem:censymsub}
Each of the sets ${R}_\mu^{{p}}\left({Y}\right)$ and $Q_\mu^{{p}}\left({Y}\right)$ is centrally symmetric with the center $\frac{1}{2}{p}$.
\end{lemma}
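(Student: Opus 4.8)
The plan is to reduce the whole statement to two elementary algebraic properties of the reflection operator $\textrm{Ref}(\cdot,c)$ together with Lemmas~\ref{lem:censym} and \ref{lem:translation}. Directly from the definition $\textrm{Ref}(S,c)=\{2c\}-S$ one checks that $\textrm{Ref}(\cdot,c)$ is an involution, $\textrm{Ref}(\textrm{Ref}(S,c),c)=S$, and that it distributes over arbitrary unions and over intersections, $\textrm{Ref}\bigl(\bigcup_i S_i,c\bigr)=\bigcup_i \textrm{Ref}(S_i,c)$ and $\textrm{Ref}(S_1\cap S_2,c)=\textrm{Ref}(S_1,c)\cap\textrm{Ref}(S_2,c)$. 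Since $R_\mu(Y)$ is a compactum by the Lyapunov convexity theorem, both $R_\mu^p(Y)$ and $Q_\mu^p(Y)$ are bounded, hence each has at most one center; so it will be enough to verify that $\tfrac12 p$ is a center of each, i.e. that each set is fixed by $\textrm{Ref}(\cdot,\tfrac12 p)$.

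For $R_\mu^p(Y)$ I would argue as follows. For every $Z\in\mathcal S_\mu^p(Y)$ we have $\mu(Z)=p$, so Lemma~\ref{lem:censym} gives that $R_\mu(Z)$ is centrally symmetric with center $\tfrac12\mu(Z)=\tfrac12 p$, that is $\textrm{Ref}\bigl(R_\mu(Z),\tfrac12 p\bigr)=R_\mu(Z)$. Taking the union over $Z\in\mathcal S_\mu^p(Y)$ and using that reflection commutes with unions yields $\textrm{Ref}\bigl(R_\mu^p(Y),\tfrac12 p\bigr)=\bigcup_{Z}\textrm{Ref}\bigl(R_\mu(Z),\tfrac12 p\bigr)=\bigcup_{Z}R_\mu(Z)=R_\mu^p(Y)$. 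For $Q_\mu^p(Y)$, Lemma~\ref{lem:translation} rewrites its definition as $Q_\mu^p(Y)=\textrm{Ref}\bigl(R_\mu(Y),\tfrac12 p\bigr)\cap R_\mu(Y)$; applying $\textrm{Ref}(\cdot,\tfrac12 p)$, distributing over the intersection, and using the involution property on the first factor gives $\textrm{Ref}\bigl(Q_\mu^p(Y),\tfrac12 p\bigr)=R_\mu(Y)\cap\textrm{Ref}\bigl(R_\mu(Y),\tfrac12 p\bigr)=Q_\mu^p(Y)$.

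I do not expect a genuine obstacle here: the lemma is essentially bookkeeping built on the symmetry statements of Lemmas~\ref{lem:censym} and \ref{lem:translation} and on the formal properties of point reflection. The only points worth stating carefully are the distributivity of $\textrm{Ref}$ over unions and intersections (a one-line set-theoretic check) and the appeal to boundedness via the Lyapunov convexity theorem, which is what makes the phrase ``the center'' unambiguous.
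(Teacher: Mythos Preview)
Your proof is correct and follows essentially the same route as the paper: both arguments deduce the symmetry of $R_\mu^p(Y)$ from Lemma~\ref{lem:censym} applied to each $R_\mu(Z)$ and then take the union, and both handle $Q_\mu^p(Y)$ by rewriting the shift via Lemma~\ref{lem:translation} as $\textrm{Ref}(R_\mu(Y),\tfrac12 p)$ and using that reflection is an involution distributing over intersections. You are simply a bit more explicit than the paper about the formal properties of $\textrm{Ref}$ and about why the center is unique.
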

\begin{proof}
According to Lemma \ref{lem:censym}, each set $ Z \in
\mathcal{S}^{{p}}_\mu(Y)$ is centrally symmetric with the center
$\frac{1}{2} {p}$. Therefore,
${R}_\mu^{{p}}\left({Y}\right)$, which is  the union of all the
sets in $Z \in \mathcal{S}^{{p}}_\mu(Y)$, is also centrally
symmetric with the center $\frac12 {p}$.

In addition,
\begin{eqnarray*}
{\textrm{Ref}}\left(Q^{{p}}_{\mu}\left({Y}\right),\frac{1}{2}{p}\right)
&=&{\textrm{Ref}}\left(\left({R}_{{\mu}}\left({Y}\right)-\left\{
{\mu}\left({Y}\right)-{p}\right\}
\right)\cap{R}_{{\mu}}\left({Y}\right),\frac{1}{2}{p}\right)\\
&=&{\textrm{Ref}}\left({\textrm{Ref}}\left({R}_{{\mu}}\left({Y}\right),\frac{1}{2}{p}\right)\cap{R}_{{\mu}}\left({Y}\right),\frac{1}{2}{p}\right)\\
&=&{R}_{{\mu}}\left({Y}\right)\cap{\textrm{Ref}}\left({R}_{{\mu}}\left({Y}\right),\frac{1}{2}{p}\right) \\
&=&{R}_{{\mu}}\left({Y}\right)\cap\left({R}_{{\mu}}\left({Y}\right)-\left\{{\mu}\left({Y}\right)-{p}\right\}\right)
=Q^{{p}}_{\mu}\left({Y}\right),
\end{eqnarray*}
where the first and last
equalities follow from the definition of $Q^{{p}}_{\mu},$ the
second and second to the last equalities follow from
Lemma~\ref{lem:translation}. The third equality holds because a
reflection of intersections equals the intersection of reflections
and, in addition, a reflection of a reflection across the same
point is the original set.
\end{proof}

Here we present the major ideas of the proof of
Theorem~\ref{thm1}.  First, as shown later, after
Theorem~\ref{thm1} is proven for equivalent measures $\mu_1$ and
$\mu_2$, this condition can be removed.  So, we make the following
assumption in Lemmas~\ref{lem:l_q},
\ref{lem:L_q}-\ref{lem:weakthm}.
\begin{assumption}\label{ass:abscon}
The measures $\mu_{1}$ and $\mu_{2}$ are finite, atomless, and
equivalent.
\end{assumption}


%
%
%
%
Under Assumption \ref{ass:abscon}, let
$f\left(x\right)=\frac{d\mu_{2}}{d\mu_{1}}(x)$ be a Radon-Nikodym
derivative of $\mu_2$ with respect to $\mu_1$. Since $f$ is
defined $\mu_1$-{\it a.e.}, we fix any its version. We shall
frequently use notations similar to
\[
\left\{ f(x)<l \right\} =\left\{ x\in{X} : f(x)<l\right\}.
\]

Second, under Assumption \ref{ass:abscon}, for any
$a\in\left[0,\mu_1\left(X\right)\right]$, we
denote 
\begin{equation}\label{eq:lq}
l_{a}=\min\left\{ l\ge0:\mu_{1}\left(\left\{ f\left(x\right)\le
l\right\} \right)\ge a\right\}.
\end{equation}
Observe that the minimum in
(\ref{eq:lq}) exists.  Indeed, let
\[
l_{a}=\inf\left\{l\ge0:\mu_{1}\left(\left\{ f\left(x\right)\le l\right\} \right)\ge a\right\}.
\]
We need to show that
$\mu_{1}\left(\left\{ f\left(x\right)\le l_a\right\} \right)\ge a.$
If $l_a=\infty$ then $\mu_1 (\{f(x)\le \infty\}) = \mu_1(X) \ge a$. Let $l_a<\infty$.
Consider a sequence $l^k\searrow l_a$, $k=1,2,\dots\ $.  Then
$\cap_{k=1}^\infty \{f(x)\le l^k\}=\{f(x)\le l_a\}$ and $\{f(x)\le
l^k\}\supseteq \{f(x)\le l^{k+1}\}$.  Therefore $\mu_1(\{f(x)\le
l_a\})=\lim_{k\to\infty}\mu_1(\{f(x)\le l^k\})\ge a.$

Third, it is possible to construct the maximal set $Z^*=X\setminus M^*$, where $M^*$ can be defined explicitly. Let ${X}^{l}=\left\{f\left(x\right)=l\right\}$. If
$\mu_1(X^l)=0$ for all $l\in [0,\infty)$, then the definition of $M^*$ is easier and we explain it first.  In this case,
there exists $a^*\in [0,\mu_1(X)]$ such that
$\mu_2\left(M^*\right)=\mu_2(X)-p_2$, and $M^*$ can be defined as
\begin{equation}\label{eq:M_star}
M^*=\{l_{a^*} \le y<l_{a^*+(\mu_1(X)-p_1)}\}.
\end{equation}
In the general situation, the number $a^*$ can be chosen to satisfy
\[
\nonumber \mu_2(\{l_{a^*}<y<l_{a^*+(\mu_1(X)-p_1)}\})\le\mu_2(X)-p_2\le\mu_2(\{l_{a^*}\le y\le l_{a^*+(\mu_1(X)-p_1)}\}),
\]
and
\begin{equation} \label{eq:gM_star}
M^*=\{l_{a^*}<y<l_{a^*+(\mu_1(X)-p_1)}\}\cup Z^1\cup Z^2,
\end{equation}
for $Z^i$, $i=1,2$, being some measurable subsets of $X^{l^i}$,
where $l^1=l_{a^*}$ and $l^2=l_{a^*+(\mu_1(X)-p_1)}$.  In
particular, if $\mu_1\left(X^{l^1}\right)=0$, let $Z^1 = X^{l^1}$, and if $\mu_1\left(X^{l^2}\right)=0$, let $Z^2 = \emptyset$. If $\mu_1\left(X^{l^1}\right)=\mu_1\left(X^{l^2}\right)=0$, then (\ref{eq:gM_star}) reduces to (\ref{eq:M_star}). It is easy to show that the number of $l$ such that $\mu_1\left(X^l\right)=0$ is countable, but we do not use this fact.

The proof of Theorem~\ref{thm1} is based on several lemmas.

\begin{lemma}\label{lem:l_q}
Under Assumption \ref{ass:abscon}, the numbers $l_a$, $a\in\left[0,\mu_1(X) \right]$ have the following properties: (a) $\mu_{1}\left(\left\{ f\left(x\right)<l_{a}\right\} \right)\le a\le\mu_{1}\left(\left\{ f\left(x\right)\le l_{a}\right\} \right)$; (b) $l_a \le l_{a'}$ if $a \le a'$.
\end{lemma}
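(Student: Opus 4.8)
The plan is to derive both parts directly from the definition of $l_a$ as a minimum, using only the fact (already verified in the excerpt) that this minimum is attained.

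Part (b) is essentially immediate. If $a\le a'$, then every $l\ge 0$ with $\mu_1(\{f(x)\le l\})\ge a'$ also satisfies $\mu_1(\{f(x)\le l\})\ge a$, so $\{l\ge 0:\mu_1(\{f(x)\le l\})\ge a'\}\subseteq\{l\ge 0:\mu_1(\{f(x)\le l\})\ge a\}$. Taking minima over the two sets gives $l_a\le l_{a'}$.

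For part (a), the right-hand inequality $a\le\mu_1(\{f(x)\le l_a\})$ holds because $l_a$ itself lies in the constraint set $\{l\ge 0:\mu_1(\{f(x)\le l\})\ge a\}$, the minimum being attained. For the left-hand inequality I would argue by contradiction: suppose $\mu_1(\{f(x)<l_a\})>a$. If $l_a=0$ this is impossible, since $f$ is a Radon--Nikodym derivative of one measure with respect to another and hence nonnegative, so $\{f(x)<0\}=\emptyset$ has $\mu_1$-measure $0$. If $l_a>0$, write $\{f(x)<l_a\}=\bigcup_{k\ge k_0}\{f(x)\le l_a-\tfrac1k\}$, where $k_0$ is large enough that $l_a-\tfrac1{k_0}\ge 0$; this is an increasing union, so by continuity of $\mu_1$ from below, $\mu_1(\{f(x)\le l_a-\tfrac1k\})\to\mu_1(\{f(x)<l_a\})>a$. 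Hence $\mu_1(\{f(x)\le l_a-\tfrac1k\})\ge a$ for some $k$, and $l_a-\tfrac1k<l_a$ belongs to the constraint set, contradicting the minimality of $l_a$. Therefore $\mu_1(\{f(x)<l_a\})\le a$.

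There is no real obstacle here; the only points requiring mild care are the boundary case $l_a=0$, handled via $f\ge 0$, and keeping the approximating values $l_a-\tfrac1k$ nonnegative so they are admissible competitors in the minimum. Note that neither atomlessness nor equivalence of $\mu_1$ and $\mu_2$ is actually used — the argument works for any fixed version of $f$ — so the lemma holds under the blanket Assumption~\ref{ass:abscon} a fortiori.
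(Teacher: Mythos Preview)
Your proof is correct and follows essentially the same approach as the paper: both derive (a) from the definition of $l_a$ as an attained minimum, handling the left inequality by contradiction via an approximation $l_a-\varepsilon\nearrow l_a$ (you use continuity from below on $\{f\le l_a-1/k\}$, the paper uses continuity from above on the complementary strips $\{l_a-\epsilon_k<f<l_a\}$, which amounts to the same thing), and both obtain (b) directly from the inclusion of constraint sets (the paper phrases it as a one-line contradiction). One tiny quibble: a fixed version of $f$ need only be nonnegative $\mu_1$-a.e., so $\{f<0\}$ may not literally be empty, but $\mu_1(\{f<0\})=0$ still holds, which is all you need.
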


\begin{proof}
For (a), by definition, $a\le\mu_{1}\left(\left\{ f\left(x\right)\le l_{a}\right\} \right)$. To prove that $\mu_{1}\left(\left\{ f\left(x\right)<l_{a}\right\} \right)\le a$, assume that $\mu_{1}\left(\left\{ f\left(x\right)<l_{a}\right\} \right) > a $. If $l_a=0$, then $\mu_{1}\left(\left\{ f\left(x\right)<l_{a}\right\} \right) = 0 > a $ which contradicts the assumption that $a \ge 0$. If $l_a>0$, let $\epsilon_k \searrow 0$, $k=1,2,\dots\ $, be a sequence of positive numbers. Then, for $k=1,2,\dots$,
\[
\mu_1\left( \left\{f(x)<l_a \right\}\right) = \mu_1\left( \left\{f(x) \le l_a-\epsilon_k \right\}\right) + \mu_1\left( \left\{l_a-\epsilon_k < f(x) < l_a \right\} \right) > a.
\]
Let $D_k= \left\{ l_a-\epsilon_k < f(x) < l_a \right\}$. We observe that $D_{k+1} \subseteq D_k$ and $\cap_{u=1}^\infty D_k =\emptyset$. Therefore, $\lim_{k\rightarrow \infty} \mu_1\left(D_k\right) = 0$. Thus,  $\mu_1\left( f(x) \le l_a-\epsilon \right)>a$ for some $\epsilon>0$ and this contradicts (\ref{eq:lq}). These contradictions imply the lemma.

For (b), assume $l_a > l_{a'}$, then $\mu_{1}\left(\left\{ f\left(x\right)\le
l_{a'}\right\} \right)\ge a' \ge a$, and this contradicts (\ref{eq:lq}).
\end{proof}

Note that for each $l\in[0,\infty)$, there exists a subfamily
\[
\left\{W_b\left({X^l}\right) \in \mathcal{F}_{{X^l}}: b\in\left[0,\mu_1 \left({X^l}\right)\right]\right\}
\]
such that ${W}_{b}\left({X^l}\right)\subset
{W}_{b'}\left({X^l}\right)\subseteq{X^l}$ whenever $b <
b'\le\mu_{1}\left({X^l}\right)$ and
$\mu_{1}\left({W}_{b}\left({X^l}\right)\right)=b$ for each
$b\in\left[0,\mu_1 \left({X^l}\right)\right]$. This fact follows
from Ross \cite[Theorem 2(LT3)]{Ross:2005}. We set
$W_0\left(X^l\right)=\emptyset$. From now on we fix a family of
${W}_{b}\left({X^l}\right)$ for each $l \in [0,\infty)$.

\begin{definition} \label{def:L_q}
Under Assumption \ref{ass:abscon}, for each $a$, define the following set
\begin{equation}\label{eq:L_a}
{L}_{a}=
\left\{ f\left(x\right)<l_{a}\right\} \cup{W}_{c}\left(X^{l_a}\right),
\end{equation}
where $c={a-\mu_{1}\left(\left\{ f\left(x\right)<l_{a}\right\} \right)}$.
\end{definition}
Note that property (a) in Lemma \ref{lem:l_q} guarantees that $c \in\left[0,\mu_1 \left({X^l}\right)\right]$.
%
\begin{lemma} \label{lem:L_q}
Under Assumption \ref{ass:abscon}, the sets $L_a\in\mathcal{F}$, $a\in\left[0,\mu_1(X) \right]$, have the following properties:
(a) $\mu_{1}\left({L}_{a}\right)=a$; (b) $\left\{ f\left(x\right)<l_{a}\right\} \subseteq{L}_{a}\subseteq\left\{ f\left(x\right)\le l_{a}\right\} $;
 (c) ${L}_{a}\subset{L}_{a'}\subseteq{X}$ if $a < a'\le\mu_{1}\left({X}\right)$.
\end{lemma}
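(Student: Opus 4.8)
The plan is to verify the three properties directly from the explicit construction of $L_a$ in \eqref{eq:L_a}, using Lemma \ref{lem:l_q} and the properties of the nested family $\{W_b(X^l)\}$. For (a), I would simply compute $\mu_1(L_a)$ by noting that the two pieces $\{f(x)<l_a\}$ and $W_c(X^{l_a})$ are disjoint (the first excludes $X^{l_a}$ entirely, while $W_c(X^{l_a})\subseteq X^{l_a}$), so $\mu_1(L_a)=\mu_1(\{f(x)<l_a\})+\mu_1(W_c(X^{l_a}))=\mu_1(\{f(x)<l_a\})+c=a$ by the definition of $c$. Property (b) is immediate from the construction: $\{f(x)<l_a\}\subseteq L_a$ since $\{f(x)<l_a\}$ is one of the two sets in the union, and $L_a\subseteq\{f(x)\le l_a\}$ since $W_c(X^{l_a})\subseteq X^{l_a}\subseteq\{f(x)\le l_a\}$ and $\{f(x)<l_a\}\subseteq\{f(x)\le l_a\}$ as well.

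The only part requiring genuine argument is the strict monotonicity (c). Suppose $a<a'\le\mu_1(X)$. By Lemma \ref{lem:l_q}(b), $l_a\le l_{a'}$, so I would split into two cases. If $l_a<l_{a'}$, then $L_a\subseteq\{f(x)\le l_a\}\subseteq\{f(x)<l_{a'}\}\subseteq L_{a'}$, and the inclusion is strict because $\mu_1(L_a)=a<a'=\mu_1(L_{a'})$. If $l_a=l_{a'}=:l$, then both $\{f(x)<l_a\}$ and $\{f(x)<l_{a'}\}$ coincide, so $L_a=\{f(x)<l\}\cup W_c(X^l)$ and $L_{a'}=\{f(x)<l\}\cup W_{c'}(X^l)$ with $c=a-\mu_1(\{f(x)<l\})<a'-\mu_1(\{f(x)<l\})=c'$; then $W_c(X^l)\subset W_{c'}(X^l)$ by the defining nesting property of the $W_b$ family, hence $L_a\subseteq L_{a'}$, and strictness again follows from comparing $\mu_1$-measures, or directly from $W_c(X^l)\subset W_{c'}(X^l)$. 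The inclusion $L_{a'}\subseteq X$ is trivial.

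I do not anticipate any serious obstacle here; the lemma is essentially bookkeeping once the sets $l_a$, $W_b(X^l)$, and $L_a$ have been set up. The one point that needs a moment's care is confirming that the minimum defining $l_a$ and property (a) of Lemma \ref{lem:l_q} together guarantee $c\in[0,\mu_1(X^{l_a})]$, so that $W_c(X^{l_a})$ is well defined — but this has already been remarked immediately after Definition \ref{def:L_q}. Everything else reduces to the disjointness of the two pieces of $L_a$ and the strict nesting of the $W_b$'s, both of which are built into the construction.
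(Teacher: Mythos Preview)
Your proposal is correct and follows essentially the same approach as the paper's proof: both verify (a) by adding the $\mu_1$-measures of the two disjoint pieces of $L_a$, read (b) off directly from the construction, and split (c) into the cases $l_a=l_{a'}$ (using the strict nesting of the $W_b$ family) and $l_a<l_{a'}$ (using property (b)). The only cosmetic difference is that you deduce strictness of $L_a\subset L_{a'}$ from the measure comparison $\mu_1(L_a)=a<a'=\mu_1(L_{a'})$, whereas the paper asserts strictness at the level of the intermediate set inclusions; your version is arguably slightly cleaner.
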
  

\begin{proof}

For (a),
$\mu_{1}\left({L}_{a}\right)  =  \mu_{1}\left(\left\{ f\left(x\right)<l_{a}\right\} \right)+\mu_{1}\left(W_{c}\left(X^{l_{a}}\right)\right)
  = \mu_{1}\left(\left\{ f\left(x\right)<l_{a}\right\} \right)+a-\mu_{1}\left(\left\{ f\left(x\right)<l_{a}\right\} \right)=a.$
Property (b) follows from ${W}_{c}\left(X^{l_a}\right) \subseteq X^{l_a}$ and (\ref{eq:L_a}).
For (c), if $l_a=l_{a'}$ then $c<c'$ where $c'={a'-\mu_{1}\left(\left\{ f\left(x\right)<l_{a}\right\} \right)}$, and thus
\[
L_a = \left\{ f\left(x\right)<l_{a}\right\} \cup{W}_{c}\left(X^{l_a}\right) \subset  \left\{ f\left(x\right)<l_{a}\right\} \cup{W}_{c'}\left(X^{l_a}\right) = L_{a'}.
\]
If $l_a < l_{a'}$ then $L_a\subseteq \left\{ f\left(x\right)\le l_{a}\right\} \subset \left\{ f\left(x\right)<l_{a'}\right\} \subseteq L_{a'}$.
\end{proof}

 Let ${M}_{a,d}={L}_{a+d}\setminus{L}_{a}$. For each
$d\in\left[0,\mu_{1}\left({X}\right)\right]$ and each
$a\in\left[0,\mu_{1}\left({X}\right)-d\right]$, denote
$g_{d}\left(a\right)=\mu_{2}\left({M}_{a,d}\right)=\int_{M_{a,d}}
f(x) \mu_1 (dx)$. The function $g_d(a)$ is non-decreasing and
continuous in $a\in\left[0,\mu_{1}\left({X}\right)-d\right]$ for
each $d \in \left[0,\mu_1(X)\right]$. However, we will not use the
fact that it is non-decreasing. So we only prove the continuity in
the following lemma.
\begin{lemma}
\label{lem:cont} Under Assumption \ref{ass:abscon},
$g_{d}\left(a\right)$ is continuous in
$a\in\left[0,\mu_{1}\left({X}\right)-d\right]$ for each $d \in
\left[0,\mu_1(X)\right]$.
\end{lemma}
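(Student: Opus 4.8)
The plan is to fix $d\in[0,\mu_1(X)]$ and show that $g_d(a)=\mu_2(M_{a,d})=\mu_2(L_{a+d})-\mu_2(L_a)$ is continuous on $[0,\mu_1(X)-d]$ by establishing separately that $a\mapsto \mu_2(L_a)$ is continuous on $[0,\mu_1(X)]$; since $g_d$ is the difference of two compositions of this map with the continuous shifts $a\mapsto a$ and $a\mapsto a+d$, continuity of $g_d$ follows. So the whole matter reduces to continuity of the single function $h(a):=\mu_2(L_a)$.

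To prove continuity of $h$ at a point $a_0$, I would treat left and right limits using the nesting property of Lemma~\ref{lem:L_q}(c): $L_a\subset L_{a'}$ when $a<a'$. For a sequence $a_k\nearrow a_0$ we have $L_{a_k}\subseteq L_{a_{k+1}}$, so $\bigcup_k L_{a_k}=:L_{-}$ satisfies $\mu_2(L_{-})=\lim_k \mu_2(L_{a_k})$ by continuity from below of the finite measure $\mu_2$; similarly for $a_k\searrow a_0$, $\bigcap_k L_{a_k}=:L_{+}$ and $\mu_2(L_{+})=\lim_k\mu_2(L_{a_k})$ by continuity from above. The key point is then to show $\mu_1(L_{+}\setminus L_{-})=0$, because Assumption~\ref{ass:abscon} (equivalence of $\mu_1$ and $\mu_2$) forces $\mu_2(L_{+}\setminus L_{-})=0$ as well, and since $L_{-}\subseteq L_{a_0}\subseteq L_{+}$ this yields $\mu_2(L_{-})=\mu_2(L_{a_0})=\mu_2(L_{+})$, i.e.\ $h(a_k)\to h(a_0)$ from both sides. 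But $\mu_1(L_{a_k})=a_k\to a_0$ by Lemma~\ref{lem:L_q}(a), so $\mu_1(L_{-})=\mu_1(L_{+})=a_0$ and hence $\mu_1(L_{+}\setminus L_{-})=0$; this is exactly where the explicit construction of the $L_a$ via the $W_c(X^{l_a})$, and the monotone bookkeeping of $l_a$ in Lemma~\ref{lem:l_q}, pay off.

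The step I expect to require the most care is the right-continuity argument when the level $l_a$ jumps, i.e.\ when $l_{a_0}<l_{a_0+\varepsilon}$ for all $\varepsilon>0$ but there is positive $\mu_1$-mass on $X^{l_{a_0}}$ not yet exhausted by $W_c(X^{l_{a_0}})$; one must check that as $a_k\searrow a_0$ the sets $L_{a_k}$ shrink down to something whose $\mu_1$-measure is still exactly $a_0$, using that $\mu_1(\{f<l_{a_0}\})+\mu_1(X^{l_{a_0}})\ge a_0$ and that $W_c$ is chosen monotonically in its parameter. Once the reduction to ``$\mu_1(L_a)=a$ together with nesting implies $\mu_2(L_a)$ continuous'' is in place, the remainder is a routine application of continuity of measures along monotone sequences together with the absolute continuity $\mu_2\ll\mu_1$.
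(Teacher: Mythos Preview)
Your proposal is correct and follows essentially the same route as the paper: reduce $g_d$ to the continuity of $a\mapsto\mu_2(L_a)$, then use the nesting $L_a\subset L_{a'}$ together with $\mu_1(L_a)=a$ and the equivalence $\mu_2\ll\mu_1$ to pass monotone limits through $\mu_2$. Your third-paragraph worry about level jumps is unnecessary---the identity $\mu_1(L_a)=a$ already forces $\mu_1(L_+\setminus L_{a_0})=\mu_1(L_{a_0}\setminus L_-)=0$ without any case analysis, exactly as in the paper's argument.
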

\begin{proof}
We show that $\mu_2(L_{a+d})$  is continuous in $a \in
\left([0,\mu_1(X)-d\right]$ for any $d \in
\left[0,\mu_1(X)\right]$. Since $g_d(a)=L_{a+d}-L_a$, this
implies the lemma. Consider a sequence $\{a^k: k=1,2,\ldots\},$
where $a_k\in [0,\mu_{1}({X})-d]$. Let $a^k\nearrow a.$  Then
$L_{a^k+d}\subset L_{a^{k+1}+d}\subset B\subseteq L_{a+d},$ where
$B=\cup_{k=1}^\infty L_{a^k+d}.$ Therefore,
$\mu_i(L_{a^k+d})\nearrow\mu_i(B)$ and
$\mu_i(L_{a+d})=\mu_i(B)+\mu_i(L_{a+d}\setminus B),$ $i=1,2.$
Since $\mu_1(L_{a^k+d})=a^k+d\nearrow a+d=\mu_1(L_{a+d}),$ we have
$\mu_1(B)=a+d$ and $\mu_1(L_{a+d}\setminus B)=0.$  Since $\mu_1$
and $\mu_2$ are equivalent measures, $\mu_2(L_{a+d}\setminus B)=0$
and $\mu_2(L_{a^k+d})\nearrow \mu_2(L_{a+d}).$

Now let $a^k\searrow a.$  Then $L_{a^k+d}\supset
L_{a^{k+1}+d}\supset D\supseteq L_{a+d},$ where
$D=\cap_{k=1}^\infty L_{a^k+d},$ and
$\mu_i(L_{a^k+d})\searrow\mu_i(D)$,
$\mu_i(L_{a+d})=\mu_i(D)-\mu_i(D\setminus L_{a+d})$ for $i=1,2.$
Similar to the previous case, $\mu_1(L_{a^k+d})=a^k+d\searrow
a+d=\mu_1(L_{a+d}),$ so $\mu_1(D)=a+d,$ $\mu_2(D\setminus
L_{a+d})= \mu_1(D\setminus L_{a+d})=0$, and $\mu_2(D)=\mu_2(L_{a+d}).$
Thus, $\mu_2(L_{a^k+d})\searrow \mu_2(L_{a+d}).$
\end{proof}

 Observe that a point ${q}\in \mathbb{R}^2$  is  on the upper (lower) boundary of ${R}_{{\mu}}\left({X}\right)$,
  if and only if ${q}\in {R}_{{\mu}}\left({X}\right)$ and $q'_{2}\le q_{2}$
   ($q'_{2}\ge q_{2}$) for any ${q}'\in{R}_{{\mu}}\left({X}\right)$ with $q'_{1}=q_{1}$. 

\begin{lemma}\label{lem:onboundary}\label{lem:infamily}
Under Assumption \ref{ass:abscon}, a point ${q}\in \mathbb{R}^2$
is on the lower boundary of ${R}_{{\mu}}\left({X}\right)$ if and
only if $0\le q_1\le\mu_1(X)$ and
$q_2={\mu}_2\left({L}_{q_{1}}\right)$, and it is on the upper
boundary of ${R}_{{\mu}}\left({X}\right)$ if and only if $0\le
q_1\le\mu_1(X)$ and
$q_2={\mu}_2\left({X}\setminus{L}_{\mu_{1}\left({X}\right)-q_{1}}\right)$.
\end{lemma}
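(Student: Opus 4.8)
The plan is to recognize the statement as a bathtub (Hardy--Littlewood rearrangement) principle: the set $L_a$ is, up to a $\mu_1$-null adjustment inside the level set $X^{l_a}$, precisely the collection of points at which the Radon--Nikodym derivative $f=d\mu_2/d\mu_1$ is smallest among sets of given $\mu_1$-measure, so $L_{q_1}$ should minimize the second coordinate on the slice $\{q'\in R_\mu(X):q_1'=q_1\}$, and $X\setminus L_{\mu_1(X)-q_1}$ should maximize it. I would therefore first prove this minimality property and then read off both halves of the lemma from the characterization of boundary points recorded just before it.

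First I would establish the key inequality: for every $Z\in\mathcal F$ with $\mu_1(Z)=a$ one has $\mu_2(Z)\ge\mu_2(L_a)$. Write $\mu_2(Z)-\mu_2(L_a)=\mu_2(Z\setminus L_a)-\mu_2(L_a\setminus Z)$. By Lemma~\ref{lem:L_q}(b), $\{f(x)<l_a\}\subseteq L_a\subseteq\{f(x)\le l_a\}$, hence $Z\setminus L_a\subseteq X\setminus L_a\subseteq\{f(x)\ge l_a\}$ and $L_a\setminus Z\subseteq L_a\subseteq\{f(x)\le l_a\}$. Therefore $\mu_2(Z\setminus L_a)=\int_{Z\setminus L_a}f(x)\,\mu_1(dx)\ge l_a\,\mu_1(Z\setminus L_a)$ and $\mu_2(L_a\setminus Z)=\int_{L_a\setminus Z}f(x)\,\mu_1(dx)\le l_a\,\mu_1(L_a\setminus Z)$. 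Since $\mu_1(Z)=\mu_1(L_a)=a$ by Lemma~\ref{lem:L_q}(a), subtracting $\mu_1(Z\cap L_a)$ from both sides gives $\mu_1(Z\setminus L_a)=\mu_1(L_a\setminus Z)$, and combining the three relations yields $\mu_2(Z)-\mu_2(L_a)\ge l_a\big(\mu_1(Z\setminus L_a)-\mu_1(L_a\setminus Z)\big)=0$.

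Next I would deduce the lower-boundary claim. If $q$ is on the lower boundary then $q\in R_\mu(X)$, so $q=\mu(Z)$ for some $Z\in\mathcal F$ and $0\le q_1=\mu_1(Z)\le\mu_1(X)$; since $\mu_1(L_{q_1})=q_1$, the minimality property of the lower boundary forces $q_2\le\mu_2(L_{q_1})$, while the key inequality gives $q_2=\mu_2(Z)\ge\mu_2(L_{q_1})$, so $q_2=\mu_2(L_{q_1})$. Conversely, if $0\le q_1\le\mu_1(X)$ and $q_2=\mu_2(L_{q_1})$, then $q=\mu(L_{q_1})\in R_\mu(X)$ by Lemma~\ref{lem:L_q}(a), and for any $q'\in R_\mu(X)$ with $q_1'=q_1$, writing $q'=\mu(Z')$, the key inequality gives $q_2'=\mu_2(Z')\ge\mu_2(L_{q_1})=q_2$; by the characterization of boundary points, $q$ is on the lower boundary. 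For the upper boundary I would pass to complements: $R_\mu(X)$ is centrally symmetric about $\tfrac12\mu(X)$ by Lemma~\ref{lem:censym}, so $q$ lies on the upper boundary iff $\mu(X)-q$ lies on the lower boundary, i.e. (by the part just proved) $0\le\mu_1(X)-q_1\le\mu_1(X)$ and $\mu_2(X)-q_2=\mu_2(L_{\mu_1(X)-q_1})$, which rearranges to $0\le q_1\le\mu_1(X)$ and $q_2=\mu_2(X)-\mu_2(L_{\mu_1(X)-q_1})=\mu_2(X\setminus L_{\mu_1(X)-q_1})$.

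The only delicate point is the key inequality, and the care there lies entirely in keeping $L_a$ sandwiched between $\{f(x)<l_a\}$ and $\{f(x)\le l_a\}$ (Lemma~\ref{lem:L_q}(b)), so that $l_a$ serves simultaneously as a lower bound for $f$ off $L_a$ and an upper bound for $f$ on $L_a$; once the $\mu_1$-measures of the two halves of the symmetric difference are matched through $\mu_1(Z)=\mu_1(L_a)$, the estimate collapses to an equality of bounds. No compactness input beyond what is already available is needed.
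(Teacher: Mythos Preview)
Your proof is correct and follows essentially the same approach as the paper: both establish the key inequality $\mu_2(Z)\ge\mu_2(L_a)$ for any $Z$ with $\mu_1(Z)=a$ by splitting the symmetric difference into $Z\setminus L_a$ and $L_a\setminus Z$, bounding $f$ by $l_a$ from the appropriate side on each piece via Lemma~\ref{lem:L_q}(b), and using $\mu_1(Z\setminus L_a)=\mu_1(L_a\setminus Z)$; both then read off the lower-boundary characterization directly and obtain the upper-boundary statement from the central symmetry of $R_\mu(X)$. The only difference is presentational: you isolate the bathtub inequality as a separate step, whereas the paper weaves it into the boundary argument.
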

\begin{proof}
For the lower boundary, let $q_2=\mu_2(L_{q_1}).$  Since
$q_1=\mu_1(L_{q_1}),$ we have $q=\mu(L_q)\in {R}_\mu(X).$ For any
set ${Z}\in\mathcal{F}$ with $\mu_{1}\left({Z}\right)=q_{1}$,
define disjoint sets ${Z}_{1}={Z}\setminus{L}_{q_{1}}$,
${Z}_{2}={L}_{q_{1}}\setminus{Z}$, and $M={Z} \cap {L}_{q_{1}}$.
 Then $Z=Z_1\cup M$,
 ${L}_{q_{1}}=Z_2\cup M$, and $\mu_1(Z_1)=\mu_1(Z_2)$, since $\mu_1(Z)=q_1=\mu_1(L_{q_1})$.
 Furthermore, $Z_1\subseteq
 \left\{f(x) \ge l_{q_{1}} \right\}$ and
 $Z_2\subseteq \left\{ f(x) \le  l_{q_{1}} \right\}$. Therefore,
\begin{eqnarray*}
\mu_{2}\left({Z}_{1}\right) & = & \int_{{Z}_{1}}f\left(x\right)\mu_{1}\left(dx\right)
 \ge  l_{q_{1}}\int_{{Z}_{1}}\mu_{1}\left(dx\right) \\
 &=&l_{q_{1}}\int_{{Z}_{2}}\mu_{1}\left(dx\right)
 \ge  \int_{{Z}_{2}}f\left(x\right)\mu_{1}\left(dx\right)=\mu_{2}\left({Z}_{2}\right).
 \end{eqnarray*}
So $\mu_{2}\left({Z}\right)= \mu_{2}\left({Z_1}\right) + \mu_{2}\left({M}\right) \ge \mu_{2} \left(Z_2\right)  + \mu_{2} (M)  = \mu_{2} \left({L}_{q_{1}}\right)$,
and thus ${q}$ is on the lower boundary of ${R}_{{\mu}}\left({X}\right)$.

If ${q}$ is on the lower boundary of
${R}_{{\mu}}\left({X}\right)$, then
$q_{2}\le\mu_{2}\left({L}_{q_{1}}\right)$. Since ${q}\in
{R}_{{\mu}}\left({X}\right)$, there exists ${Z}\in\mathcal{F}$
with ${\mu}\left({Z}\right)={q}$. But, as proved above,
$\mu_{2}\left({Z}\right)\ge\mu_{2}\left({L}_{q_{1}}\right)$  for
any ${Z}\in\mathcal{F}$ with $\mu_{1}\left({Z}\right)=q_{1}$. Thus
$q_{2}\ge\mu_{2}\left({L}_{q_{1}}\right)$. Therefore,
$q_{2}=\mu_{2}\left({L}_{q_{1}}\right)$.

The statement on the upper boundary follows from the symmetry of
${R}_{{\mu}}\left({X}\right)$.
\end{proof}

\begin{lemma}\label{lem:Z_star}

Under Assumption \ref{ass:abscon}, given ${u}=\left(u_{1},u_{2}\right)\in{R}_{{\mu}}\left({X}\right)$,
there exists $a^{*}\in\left[0,\mu_{1}\left({X}\right)-u_{1}\right]$
such that ${\mu}\left({M}_{a^{*},u_{1}}\right)={u}$.

\end{lemma}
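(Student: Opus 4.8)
The plan is to exploit that the first coordinate of $\mu(M_{a,u_1})$ is pinned down for free, and then to hit the prescribed value $u_2$ of the second coordinate by an intermediate value argument, using the description of the boundary of $R_\mu(X)$ furnished by Lemma~\ref{lem:onboundary}. So, set $d=u_1$ (note $u_1\le\mu_1(X)$ since $u\in R_\mu(X)$).

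First I would observe that for every $a\in[0,\mu_1(X)-u_1]$ we have $M_{a,u_1}=L_{a+u_1}\setminus L_a$ with $L_a\subseteq L_{a+u_1}$ by Lemma~\ref{lem:L_q}(c), hence, by Lemma~\ref{lem:L_q}(a), $\mu_1(M_{a,u_1})=\mu_1(L_{a+u_1})-\mu_1(L_a)=(a+u_1)-a=u_1$. Thus it only remains to find $a^*\in[0,\mu_1(X)-u_1]$ with $g_{u_1}(a^*)=\mu_2(M_{a^*,u_1})=u_2$. By Lemma~\ref{lem:cont} the map $a\mapsto g_{u_1}(a)$ is continuous on $[0,\mu_1(X)-u_1]$, so by the intermediate value theorem it suffices to prove the two inequalities $g_{u_1}(0)\le u_2\le g_{u_1}(\mu_1(X)-u_1)$.

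For the left endpoint, Lemma~\ref{lem:L_q}(a) gives $\mu_1(L_0)=0$, hence $\mu_2(L_0)=0$ by equivalence of $\mu_1$ and $\mu_2$, so $g_{u_1}(0)=\mu_2(L_{u_1}\setminus L_0)=\mu_2(L_{u_1})$. By the lower-boundary part of Lemma~\ref{lem:onboundary}, the point $(u_1,\mu_2(L_{u_1}))$ lies on the lower boundary of $R_\mu(X)$, so $q_2\ge\mu_2(L_{u_1})$ for every $q\in R_\mu(X)$ with $q_1=u_1$; taking $q=u$ gives $u_2\ge g_{u_1}(0)$. For the right endpoint, Lemma~\ref{lem:L_q}(a) gives $\mu_1(L_{\mu_1(X)})=\mu_1(X)$, whence $\mu_1(X\setminus L_{\mu_1(X)})=0$ and, by equivalence, $\mu_2(X\setminus L_{\mu_1(X)})=0$; therefore $g_{u_1}(\mu_1(X)-u_1)=\mu_2(L_{\mu_1(X)}\setminus L_{\mu_1(X)-u_1})=\mu_2(X)-\mu_2(L_{\mu_1(X)-u_1})$, which is exactly the second coordinate of the point of the upper boundary of $R_\mu(X)$ with first coordinate $u_1$, again by Lemma~\ref{lem:onboundary}. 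Hence $q_2\le g_{u_1}(\mu_1(X)-u_1)$ for every $q\in R_\mu(X)$ with $q_1=u_1$, and in particular $u_2\le g_{u_1}(\mu_1(X)-u_1)$. Combining the two bounds with continuity yields $a^*\in[0,\mu_1(X)-u_1]$ with $\mu(M_{a^*,u_1})=(u_1,g_{u_1}(a^*))=(u_1,u_2)=u$.

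The argument is essentially routine once Lemmas~\ref{lem:cont} and \ref{lem:onboundary} are available; the only points needing care are the degenerate cases $u_1\in\{0,\mu_1(X)\}$, where $[0,\mu_1(X)-u_1]$ degenerates to a single point (and the two displayed inequalities then force $g_{u_1}(0)=u_2$ directly), and the null-set identifications $\mu_2(L_0)=0$ and $\mu_2(X\setminus L_{\mu_1(X)})=0$, for which the equivalence of $\mu_1$ and $\mu_2$ from Assumption~\ref{ass:abscon} is precisely what is used. I expect the only (minor) obstacle to be this bookkeeping of $\mu$-null sets so that the endpoint values of $g_{u_1}$ match the lower- and upper-boundary formulas cleanly.
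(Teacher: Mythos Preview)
Your proof is correct and follows essentially the same route as the paper: compute the endpoint values $g_{u_1}(0)=\mu_2(L_{u_1})$ and $g_{u_1}(\mu_1(X)-u_1)=\mu_2(X\setminus L_{\mu_1(X)-u_1})$ via the equivalence of $\mu_1$ and $\mu_2$, invoke Lemma~\ref{lem:onboundary} to sandwich $u_2$ between them, and apply the intermediate value theorem using Lemma~\ref{lem:cont}. If anything, you are slightly more explicit than the paper in verifying $\mu_1(M_{a,u_1})=u_1$ and in noting the degenerate endpoints $u_1\in\{0,\mu_1(X)\}$.
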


\begin{proof}
Since $\mu_1(L_0)=0$ and $\mu_1$ and $\mu_2$ are equivalent,
$\mu_2(L_0)=0$.  Therefore, $ g_{u_{1}}(0)=\mu_2\left(L_{u_1}
\setminus L_0 \right) = \mu_2\left( L_{u_1} \right)-\mu_2\left(
L_{0} \right) = \mu_2\left( L_{u_1} \right). $ Similarly,
$\mu_2\left(X \setminus L_{\mu_1(X)}\right) = 0$, because
$\mu_1\left(X \setminus L_{\mu_1(X)}\right) =\mu_1\left(X\right) -
\mu_1\left(L_{\mu_1(X)}\right) = 0$.  Thus
\begin{eqnarray*}
g_{u_1}\left(\mu_1(X)-u_1\right) &=& \mu_2\left(L_{\mu_1(X)}
\setminus L_{\mu_1(X)-u_1} \right)
= \mu_2\left(L_{\mu_1(X)}\right) - \mu_2 \left(L_{\mu_1(X)-u_1} \right) \\
 &=& \mu_2\left(X\right) - \mu_2\left(X \setminus L_{\mu_1(X)}\right) - \mu_2 \left(L_{\mu_1(X)-u_1} \right) \\
 &=& \mu_2\left(X \setminus L_{\mu_1(X)-u_1} \right).
\end{eqnarray*}
According to Lemma \ref{lem:onboundary}, the
 point $\left(u_1,g_{u_{1}}\left(0\right)\right)$ is on the lower boundary of the range
 ${R}_{{\mu}}\left({X}\right)$ and the point $\left(u_1,g_{u_{1}}\left(\mu_{1}\left({X}\right)-u_{1}\right)\right)$
 is on the upper boundary of the range ${R}_{{\mu}}\left({X}\right)$. So
 $u_{2}\in\left[g_{u_{1}}\left(0\right),g_{u_{1}}\left(\mu_{1}\left({X}\right)-u_{1}\right)\right]$.
Since $g_{u_{1}}\left(a\right)$ is continuous in
$a\in\left[0,\mu_{1}\left({X}\right)-u_{1}\right]$, there exists
$a^{*}$, such that $g_{u_{1}}\left(a^{*}\right)=u_{2}$. That is,
$\mu_{2}\left({M}_{a^{*},u_{1}}\right)=u_{2}$. By definition,
$\mu_{1}\left({M}_{a^{*},u_{1}}\right)=u_{1}$. Therefore,
${\mu}\left({M}_{a^{*},u_{1}}\right)={u}$.
\end{proof}

Note that Lemmas \ref{lem:l_q}, and \ref{lem:L_q}-\ref{lem:Z_star} hold if one
replaces everywhere the set $X$ with any measurable subset $Z \in
\mathcal{F}$. In particular, expressions such as $\{f(x)<l\}$
should be replaced with $\{x\in Z: f(x)<l\}$. We define explicitly
\begin{equation} \label{eq:l_a(Y)}
l_{a}(Z)=\min\left\{ l\ge0:\mu_{1}\left(\left\{ x \in Z: f\left(x\right)\le
l\right\} \right)\ge a\right\}.
\end{equation}
Let $Z^l = \{ x \in Z : f(x)=l \}$. As follows from Ross
\cite[Theorem 2(LT3)]{Ross:2005}, for each $l\in[0,\infty)$, there
exists a family
\[
\left\{W_b\left({Z^l}\right) \in \mathcal{F}_{{Z^l}}: b\in\left[0,\mu_1 \left({Z^l}\right)\right]\right\}
\]
such that ${W}_{b}\left({Z^l}\right)\subset
{W}_{b'}\left({Z^l}\right)\subseteq{Z^l}$ whenever $b <
b'\le\mu_{1}\left({Z^l}\right)$ and
$\mu_{1}\left({W}_{b}\left({Z^l}\right)\right)=b$ for each
$b\in\left[0,\mu_1 \left({Z^l}\right)\right]$. Again, we fix a
family of ${W}_{b}\left({Z^l}\right)$ for each $l \in [0,\infty)$
and each $Z$, and define
\[
{L}_{a}(Z)= \left\{ x \in Z : f\left(x\right)<l_{a}\right\}
\cup{W}_{c}\left(Z^{l_a}\right),
\]
where $c=a-\mu_1 (\{x\in Z : f(x) <a\})$.
Note that $l_a(X)=l_a$ and $L_a(X)=L_a$, for each
$a\in[0,\mu_1(X)]$.  In the following two
lemmas and their proofs, for a given $u\in R_\mu(X)$,  we consider a point
$a^*\in\left[0,\mu_1(X)-u_1\right]$ with $\mu\left(
{M}_{a^{*},u_{1}} \right) = u$ and the set $Z=X\setminus
M_{a^*,u_1}$.  The existence of $a^*$ is stated in Lemma
\ref{lem:Z_star}. Later it will become clear that that $Z$ is the
maximal subset with the vector measure $p=\mu(X)-u$ and
$M_{a^*,u_1}$ is the the minimal subset with the vector measure
$p=u$.

\begin{lemma}\label{lem:connection}
Let Assumption \ref{ass:abscon} hold. For a given
$u=\left(u_1,u_2\right)\in R_\mu(X)$, consider
$a^*\in\left[0,\mu_1(X)-u_1\right]$ with $\mu\left(
{M}_{a^{*},u_{1}} \right) = u$.
  Then
\begin{equation}\label{eq:newL}
\mu_2\left(L_{a}(Z)\right)=
\begin{cases}
\mu_2\left({L}_{a}\right), & \textrm{if } a\in\left[0,a^{*}\right];\\
\mu_2\left({L}_{a+u_{1}}\setminus{M}_{a^{*},u_{1}}\right), & \textrm{if }
a\in\left(a^{*},{\mu_1}\left({X}\right)-u_{1}\right].
\end{cases}
\end{equation}
\end{lemma}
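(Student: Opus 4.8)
The plan is to reduce (\ref{eq:newL}) to two ingredients: an identification of the ``lower level'' value $l_a(Z)$ in terms of $l_a$ and $l_{a+u_1}$, and the elementary observation that two measurable subsets of $Z$ that are both ``sandwiched'' between $\{x\in Z:f(x)<l\}$ and $\{x\in Z:f(x)\le l\}$ and have the same $\mu_1$-measure must have the same $\mu_2$-measure. If $u_1=0$, then $M_{a^*,u_1}=\emptyset$, $Z=X$, and both cases of (\ref{eq:newL}) are trivial, so I assume $u_1>0$.

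First I would compute $\psi_Z(l):=\mu_1(\{x\in Z:f(x)\le l\})$, using $Z=X\setminus M_{a^*,u_1}$ with $M_{a^*,u_1}=L_{a^*+u_1}\setminus L_{a^*}$ together with the inclusions $\{f(x)<l_b\}\subseteq L_b\subseteq\{f(x)\le l_b\}$ from Lemma \ref{lem:L_q}(b). For $l<l_{a^*}$ one has $\{f(x)\le l\}\subseteq\{f(x)<l_{a^*}\}\subseteq L_{a^*}$, which is disjoint from $M_{a^*,u_1}$, so $\psi_Z(l)=\mu_1(\{f(x)\le l\})$. For $l_{a^*}\le l<l_{a^*+u_1}$, since $\{f(x)\le l\}\subseteq\{f(x)<l_{a^*+u_1}\}\subseteq L_{a^*+u_1}$ and $L_{a^*}\subseteq\{f(x)\le l_{a^*}\}\subseteq\{f(x)\le l\}$, a short set manipulation gives $\{x\in Z:f(x)\le l\}=\{f(x)\le l\}\setminus M_{a^*,u_1}=\{f(x)\le l\}\cap L_{a^*}=L_{a^*}$, so $\psi_Z(l)=a^*$. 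For $l\ge l_{a^*+u_1}$, $M_{a^*,u_1}\subseteq L_{a^*+u_1}\subseteq\{f(x)\le l\}$, so $\psi_Z(l)=\mu_1(\{f(x)\le l\})-u_1$. Combining this with (\ref{eq:lq}), (\ref{eq:l_a(Y)}) and Lemma \ref{lem:l_q}(a) — in particular $\mu_1(\{f(x)\le l_b\})\ge b$ while $\mu_1(\{f(x)\le l\})<b$ for $l<l_b$ — I obtain $l_a(Z)=l_a$ when $a\in[0,a^*]$ (near $l_a\le l_{a^*}$ the functions $\psi_Z$ and $\mu_1(\{f(x)\le\cdot\})$ agree) and $l_a(Z)=l_{a+u_1}$ when $a\in(a^*,\mu_1(X)-u_1]$ (since $\psi_Z(l)<a$ for all $l<l_{a+u_1}$ while $\psi_Z(l_{a+u_1})=\mu_1(\{f(x)\le l_{a+u_1}\})-u_1\ge a$).

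Next, write $l:=l_a(Z)$ and let $N$ be the right-hand candidate in (\ref{eq:newL}): $N=L_a$ if $a\le a^*$, and $N=L_{a+u_1}\setminus M_{a^*,u_1}$ if $a>a^*$. I would check in each case that $N\subseteq Z$, that $\mu_1(N)=a$, and that $\{x\in Z:f(x)<l\}\subseteq N\subseteq\{x\in Z:f(x)\le l\}$. For $a\le a^*$ this uses $L_a\subseteq L_{a^*}\subseteq Z$ (Lemma \ref{lem:L_q}(c)) and Lemma \ref{lem:L_q}(a),(b) with $l_a=l$; for $a>a^*$ it uses $M_{a^*,u_1}\subseteq L_{a^*+u_1}\subseteq L_{a+u_1}$ (so $N=L_{a+u_1}\cap Z$ and $\mu_1(N)=(a+u_1)-u_1=a$) together with $\{f(x)<l_{a+u_1}\}\subseteq L_{a+u_1}\subseteq\{f(x)\le l_{a+u_1}\}$ and $l_{a+u_1}=l$. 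The set $L_a(Z)$ has these same three properties by construction, since Lemma \ref{lem:L_q}(a),(b) holds with $X$ replaced by $Z$. Finally, for any measurable $S_1,S_2\subseteq Z$ with $\{x\in Z:f(x)<l\}\subseteq S_i\subseteq\{x\in Z:f(x)\le l\}$ and $\mu_1(S_1)=\mu_1(S_2)$, one has $\mu_2(S_i)=\mu_2(\{x\in Z:f(x)<l\})+l\cdot\bigl(\mu_1(S_i)-\mu_1(\{x\in Z:f(x)<l\})\bigr)$, which is the same for $i=1,2$; applying this with $S_1=N$ and $S_2=L_a(Z)$ yields $\mu_2(L_a(Z))=\mu_2(N)$, i.e. (\ref{eq:newL}).

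The hard part will be the computation in the second paragraph: $M_{a^*,u_1}$ is a ``middle slice'' in the $f$-ordering, not a level set, so I must track carefully how $\{x\in Z:f(x)\le l\}$ loses measure to it as $l$ passes $l_{a^*}$ and then $l_{a^*+u_1}$, and the possibility that some level sets $X^l$ carry positive $\mu_1$-measure (so that $l_{a^*}$ may coincide with $l_{a^*+u_1}$ and the $W_c(\cdot)$-pieces enter $L_a$, $L_{a^*}$, $L_a(Z)$) is exactly what forces the argument to be phrased through the sandwich inclusions and the measures $\mu_1,\mu_2$ rather than through explicit descriptions of $L_a(Z)$.
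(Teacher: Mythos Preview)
Your proposal is correct and follows the same underlying strategy as the paper: first identify $l_a(Z)$ in terms of $l_a$ and $l_{a+u_1}$, then exploit that $f\equiv l$ on the level set $X^l$ to match $\mu_2$-measures.

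The organization differs slightly. The paper handles the two cases $a\in[0,a^*]$ and $a\in(a^*,\mu_1(X)-u_1]$ separately, in each case bounding $l_a(Z)$ from above and below directly and then explicitly decomposing $L_a(Z)$ and the target set as $\{f<l\}\cup W_c(\cdot)$ to compare the pieces. You instead compute $\psi_Z(l)=\mu_1(\{x\in Z:f(x)\le l\})$ once in three regimes and read off $l_a(Z)$ from that, and you replace the explicit $W_c$ bookkeeping by the clean ``sandwich'' observation that any two sets squeezed between $\{x\in Z:f(x)<l\}$ and $\{x\in Z:f(x)\le l\}$ with equal $\mu_1$-measure automatically have equal $\mu_2$-measure. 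This abstraction is exactly what the paper's computation is doing implicitly, and it handles the delicate case $l_{a^*}=l_{a^*+u_1}$ (positive-mass level sets) without extra work, which is precisely the concern you flagged in your last paragraph. The two proofs are equivalent in content; yours is somewhat more streamlined.
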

\begin{proof}
First, consider the case $a\in\left[0,a^{*}\right]$. We have $Z=X
\setminus M_{a^*,u_1} \supseteq L_{a^*} \supseteq L_{a} =
\left\{f(x)<l_a\right\} \cup W_c\left(X^{l_{a}}\right)$, where $c
= a - \mu_1 \left( \left\{ f(x) < l_a \right\} \right)$. In
addition, $\left\{f(x)<l_a\right\} \cup W_c\left(X^{l_{a}}\right)
\subseteq \left\{f(x) \le l_a\right\} $. Therefore
\begin{eqnarray*}
&&\mu_1 \left( \left\{x \in Z: f(x) \le l_a \right\} \right)
=\mu_1(Z\cap\{f(x)\le l_a\})\\ &\ge& \mu_1 \left(Z\cap( \left\{
f(x)<l_a\right\} \cup W_c(X^{l_a})) \right) = \mu_1 \left(
\left\{f(x)<l_a\right\} \cup W_c(X^{l_a}) \right) = a.
\end{eqnarray*}
Thus, (\ref{eq:l_a(Y)}) implies that $l_a(Z) \le l_a$. On the
other hand, take an arbitrary $l<l_a$. Since $Z \subseteq X$,
\begin{eqnarray*}
\mu_1 \left( \left\{x \in Z: f(x) \le l \right\} \right)\le \mu_1
\left( \left\{ f(x) \le l \right\} \right)<a.
\end{eqnarray*}
Therefore, $l_a(Z)>l$ for all $l<l_a.$  Thus, $l_a(Z)\ge l_a.$  We
conclude that $l_a(Z)=l_a.$

Denote $A=\{ f(x)<l_a\}.$ Since $Z\supseteq L_a\supseteq A$ and $l_a(Z)=l_a$, then
$\{x\in Z:\, f(x)<l_a(Z)\}= A.$ By definition, each of the sets $L_a$ and $L_a(Z)$ is the union of two
disjoint subsets: $L_a= A\cup W_c(X^{l_a})$ and $L_a(Z)=A\cup
W_b(Z^{l_a})$ with $c = a - \mu_1(A) = b$. Thus, since
$X^{l_a}\supseteq Z^{l_a}$ and $f(x)=l_a$ when $x\in X^{l_a}$, we
have $\mu_2(W_c(X^{l_a}))= \mu_2(W_c(Z^{l_a}))=l_ac.$ So,
$\mu_2(L_a(Z))=\mu_2(A)+\mu_2(W_c(Z^{l_a}))=\mu_2(A)+\mu_2(W_c(X^{l_a}))=\mu_2(L_a).$

Second, consider the case $a\in\left(a^{*},\mu_1(X) - u_1
\right]$. Observe that $M_{a^*,u_1} \subseteq L_{a^*+u_1} \subset
L_{a+u_1} = \left\{f(x)<l_{a+u_1}\right\} \cup
W_c\left(X^{l_{a+u_1}}\right)$, where $c = a + u_1 - \mu_1 \left(
\left\{ f(x) < l_{a+u_1} \right\} \right)$. In addition,
$\left\{f(x)<l_{a+u_1}\right\} \cup W_c\left(X^{l_{a+u_1}}\right)
\subseteq \left\{f(x) \le l_{a+u_1}\right\} $. Therefore,
\begin{eqnarray*}
&&\mu_1 \left( \left\{x \in Z: f(x) \le l_{a+u_1} \right\} \right)  \\
&=&  \mu_1 \left( \left\{ f(x) \le l_{a+u_1} \right\}  \cap Z \right)
= \mu_1 \left( \left\{ f(x) \le l_{a+u_1} \right\} \setminus
M_{a^*,u_1} \right) \\
&\ge& \mu_1 \left( \left\{f(x)<l_{a+u_1}\right\} \cup W_c \left(X^{l_{a+u_1}}\right) \setminus M_{a^*,u_1}  \right)
 =  a + u_1 - u_1 = a.
\end{eqnarray*}
Thus, (\ref{eq:l_a(Y)}) implies that $l_a(Z) \le l_{a+u_1}$. On
the other hand, note that $M_{a^*,u_1} \subseteq \left\{f(x) \le l_a(Z)\right\}$. Indeed, since $a>a^*$, we have $l_a(Z) \ge l_{a^*} (Z) = l_{a^*}$. Assume $l_{a^*} \le l_a(Z) < l_{a^*+u_1}$, then $\{ x \in Z: f(x) \le l_a (Z) \} = \{f(x)\le l_a(Z)\} \setminus M_{a^*,u_1} = L_{a^*}$, and $a = \mu_1 (\{ x \in Z: f(x) \le l_a (Z) \}) = \mu_1 (L_{a^*}) = a^*$, which is a contradiction. Therefore, $l_a(Z) \ge l_{a^*+u_1}$ and $M_{a^*,u_1} \subseteq \left\{f(x) \le l_{a^*+u_1}\right\} \subseteq \left\{f(x) \le l_a(Z)\right\}$. Thus,
$\{x\notin Z:\, f(x)\le l_a(Z)\}=\{x\in M_{{a^*},u_1}:\, f(x)\le
l_a(Z)\}=M_{{a^*},u_1}$ and
\[
\mu_1 \left( \left\{ f(x) \le l_a(Z) \right\} \right) = \mu_1
\left( \left\{x \in Z: f(x) \le l_a(Z) \right\} \right) +
\mu_1\left(M_{a^*,u_1}\right) \ge  a+u_1,
\]
where the last step follows from property (b) in Lemma \ref{lem:L_q}.
Formula (\ref{eq:lq}) implies that $l_a(Z) \ge l_{a+u_1}$.
Therefore, $l_a(Z) = l_{a+u_1}$.

Consider again the identity $L_{a+u_1}=\{f(x)<l_{a+u_1}\}\cup
W_c(X^{l_a+u_1}),$ where the sets in the union are disjoint and
$c=\left( a+u_1 \right) -\mu_1(\{f(x)<l_{a+u_1}\})$. Similarly, $L_a(Z)=\{x\in Z:\,f(x)<l_{a+u_1}\}\cup
W_b(Z^{l_a+u_1}),$ where $b=a - \mu_1(\{ x \in Z : f(x) < l_{a}(Z) \})$.
Since $l_a(Z) = l_{a+u_1}$ and $\{f(x)<l_{a+u_1}\}\supset M_{a^*,u_1}$, we have $b=a - \mu_1(\{ x \in Z : f(x) < l_{a+u_1} \}) = a - \mu_1(\{ f(x) < l_{a+u_1} \} \setminus M_{a^*,u_1}) = (a + u_1) - \mu_1(\{ f(x) < l_{a+u_1} \}) = c$.
Thus,
\begin{eqnarray*}
\mu_2(L_{a}(Z)) &=& \mu_2(\{x \in Z: f(x) < l_a(Z)\})  + \mu_2( W_b (Z^{l_a(Z)})) \\
&=& \mu_2(\{ x \in Z : f(x) < l_{a+u_1} \}) + l_{a+u_1} \mu_1( W_b (Z^{l_{a+u_1}})) \\
&=& \mu_2(\{ f(x) < l_{a+u_1} \}) - \mu_2( M_{a^*,u_1} )+ l_{a+u_1} \mu_1( W_c (X^{l_{a+u_1}}))\\
&=& \mu_2({L}_{a+u_1}) -\mu_2(M_{a^*,u_1} )
 =\mu_2({L}_{a+u_1} \setminus M_{a^*,u_1} ),
\end{eqnarray*}
where the second equality holds because $l_a(Z)=l_{a+u_1}$,
$f(x)=l_{a+u_1}$ for $x\in X^{l_{a+u_1}}$, and
$Z^{l_{a+u_1}}\subseteq X^{l_{a+u_1}}$ (in fact
$Z^{l_{a+u_1}}=X^{l_{a+u_1}}$, but we do not use this). The third
equality holds because of $\{x\in
Z:\,f(x)<l_{a+u_1}\}=\{f(x)<l_{a+u_1}\}\setminus M_{a^*,u_1}$,
$\{f(x)<l_{a+u_1}\}\supset M_{a^*,u_1}$, and $b=c.$
The fourth equality follows from $l_{a+u_1} \mu_1( W_c (X^{l_{a+u_1}})) = \mu_2 ( W_c (X^{l_{a+u_1}}))$.
\end{proof}

\begin{lemma} \label{lem:boundary}
Let Assumption \ref{ass:abscon} hold. For a given
$u=\left(u_1,u_2\right)\in R_\mu(X)$, consider
$a^*\in\left[0,\mu_1(X)-u_1\right]$ with $\mu\left(
{M}_{a^{*},u_{1}} \right) = u$. 
Let ${q}=\left(q_1,q_2\right)$ be on the
lower (upper) boundary of ${R}_\mu \left(Z\right)$. If $q_1\in
\left[0,a^*\right]$ ($q_1 \in \left[0, \mu_1(X)-u_1 -a^*
\right)$), then ${q}$ is on the lower (upper) boundary of
${R}_{{\mu}}\left({X}\right)$ and, if $q_1 \in \left(a^*,
\mu_1(X)-u_1 \right]$ ($q_1\in
\left[\mu_1(X)-u_1-a^*,\mu_1(X)-u_1\right]$), then
${r}={\mu}\left({X}\right)-{u}-{q}$ is on the upper (lower)
boundary of ${R}_{{\mu}}\left({X}\right)$.

\end{lemma}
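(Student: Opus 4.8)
The plan is to reduce everything to Lemma~\ref{lem:onboundary} (the description of the lower and upper boundaries of $R_\mu(\cdot)$ via the sets $L_a(\cdot)$), which, as noted after Lemma~\ref{lem:Z_star}, applies to the subset $Z$ as well as to $X$, and then to pass between the family $\{L_a(Z)\}$ and the family $\{L_a\}$ by means of the explicit identity in Lemma~\ref{lem:connection}. First I would record the bookkeeping facts used throughout: since $\mu(M_{a^*,u_1})=u$ we have $\mu_1(M_{a^*,u_1})=u_1$ and $\mu_2(M_{a^*,u_1})=u_2$, hence $\mu_1(Z)=\mu_1(X)-u_1$ and $\mu_2(Z)=\mu_2(X)-u_2$; moreover, for $a>a^{*}$ one has $a+u_1>a^{*}+u_1$, so by the monotonicity in Lemma~\ref{lem:L_q}(c) $M_{a^*,u_1}=L_{a^*+u_1}\setminus L_{a^*}\subseteq L_{a+u_1}$, which gives $\mu_2(L_{a+u_1}\setminus M_{a^*,u_1})=\mu_2(L_{a+u_1})-u_2$. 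Also $L_a(Z)\subseteq Z$ by construction, so $\mu_2(Z\setminus L_a(Z))=\mu_2(Z)-\mu_2(L_a(Z))$.

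\textbf{Lower boundary of $R_\mu(Z)$.} Let $q=(q_1,q_2)$ lie on the lower boundary of $R_\mu(Z)$. By Lemma~\ref{lem:onboundary} applied to $Z$, $0\le q_1\le\mu_1(Z)=\mu_1(X)-u_1$ and $q_2=\mu_2(L_{q_1}(Z))$. If $q_1\in[0,a^{*}]$, Lemma~\ref{lem:connection} yields $q_2=\mu_2(L_{q_1})$; since also $q_1\le\mu_1(X)$, Lemma~\ref{lem:onboundary} applied to $X$ shows $q$ is on the lower boundary of $R_\mu(X)$. If $q_1\in(a^{*},\mu_1(X)-u_1]$, Lemma~\ref{lem:connection} and the preliminary remark give $q_2=\mu_2(L_{q_1+u_1}\setminus M_{a^*,u_1})=\mu_2(L_{q_1+u_1})-u_2$. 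Set $r=\mu(X)-u-q$. Then $r_1=\mu_1(X)-u_1-q_1\in[0,\mu_1(X)]$, $\mu_1(X)-r_1=q_1+u_1$, and $r_2=\mu_2(X)-u_2-q_2=\mu_2(X)-\mu_2(L_{q_1+u_1})=\mu_2\bigl(X\setminus L_{\mu_1(X)-r_1}\bigr)$, so by Lemma~\ref{lem:onboundary} $r$ is on the upper boundary of $R_\mu(X)$.

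\textbf{Upper boundary of $R_\mu(Z)$.} This case is parallel. If $q$ is on the upper boundary of $R_\mu(Z)$, then by Lemma~\ref{lem:onboundary} applied to $Z$ we have $q_2=\mu_2\bigl(Z\setminus L_{a}(Z)\bigr)$ with $a=\mu_1(X)-u_1-q_1$. When $q_1\in[0,\mu_1(X)-u_1-a^{*})$ we have $a>a^{*}$, and using $\mu_2(Z)=\mu_2(X)-u_2$ together with $\mu_2(L_a(Z))=\mu_2(L_{a+u_1})-u_2$ one gets $q_2=\mu_2(X)-\mu_2(L_{a+u_1})=\mu_2\bigl(X\setminus L_{\mu_1(X)-q_1}\bigr)$, so $q$ lies on the upper boundary of $R_\mu(X)$. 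When $q_1\in[\mu_1(X)-u_1-a^{*},\mu_1(X)-u_1]$ we have $a\le a^{*}$, so Lemma~\ref{lem:connection} gives $\mu_2(L_a(Z))=\mu_2(L_a)$; then $r=\mu(X)-u-q$ satisfies $r_1=a\in[0,a^{*}]\subseteq[0,\mu_1(X)]$ and $r_2=\mu_2(X)-u_2-q_2=\mu_2(L_a)=\mu_2(L_{r_1})$, so by Lemma~\ref{lem:onboundary} $r$ is on the lower boundary of $R_\mu(X)$.

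\textbf{Main difficulty.} There is no deep obstacle left: the substance was already placed in Lemmas~\ref{lem:onboundary} and~\ref{lem:connection}. The only delicate points are bookkeeping — checking in each sub-case that the index of $L$ remains in the admissible interval $[0,\mu_1(X)]$, matching the open/closed endpoints in the statement of the lemma, and noting that the identity $\mu_2(L_{a+u_1}\setminus M_{a^*,u_1})=\mu_2(L_{a+u_1})-u_2$ relies on the inclusion $M_{a^*,u_1}\subseteq L_{a+u_1}$, which is exactly why the dichotomy is drawn at $a^{*}$.
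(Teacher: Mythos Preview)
Your proof is correct and follows the paper's approach: both rely on Lemma~\ref{lem:onboundary} (applied to $Z$ and to $X$) together with the identity of Lemma~\ref{lem:connection} to translate between $\mu_2(L_a(Z))$ and $\mu_2(L_a)$. The only minor difference is in the upper-boundary half: the paper notes that $r=\mu(X)-u-q$ is the reflection of $q$ through the center $\tfrac12\mu(Z)$ of $R_\mu(Z)$, hence lies on the lower boundary of $R_\mu(Z)$, and then simply invokes the already-proved lower-boundary case, whereas you redo the parallel computation directly --- which unpacks to the same thing.
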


\begin{proof}

When ${q}$ is on the lower boundary of ${R}_\mu \left(Z\right)$,
according to Lemma \ref{lem:onboundary}, $\mu_2(L_{q_1}(Z))=q_2$. If $q_{1}\in\left[0,a^{*}\right]$, then by Lemma \ref{lem:connection},
$\mu_2(L_{q_1})=\mu_2(L_{q_1}(Z))=q_2$, and Lemma
\ref{lem:onboundary} implies that ${q}$ is on the lower boundary of
${R}_{{\mu}}\left({X}\right)$.

If
$q_{1}\in\left(a^{*},{\mu}\left({X}\right)-u_{1}\right]$, then for
$r=\left(r_1,r_2\right)$
\begin{eqnarray*}
r_2 & = & \mu_2\left({X}\right)- u_2 - q_2
 = \mu_2\left({X}\right)-\mu_2\left({M}_{a^{*},u_{1}}\right)-\mu_2\left({L}_{q_{1}}(Z)\right)\\
 &=& \mu_2\left({X}\right)-\left(\mu_2\left({M}_{a^{*},u_{1}}\right)+\mu_2\left({L}_{q_{1}+u_{1}}\setminus{M}_{a^{*},u_{1}}\right)\right)
  =  \mu_2\left({X}\right)-\mu_2\left({L}_{q_{1}+u_{1}}\right)\\
 & = & \mu_2\left({X}\setminus{L}_{q_{1}+u_{1}}\right)=\mu_2\left({X}\setminus{L}_{\mu_{1}\left({X}\right)-r_{1}}\right),
\end{eqnarray*}
where the first and last equalities follow from the definition of
$r$, 
the second equality follows from Lemma \ref{lem:Z_star}, the third
equality follows from Lemma \ref{lem:connection}, and the fourth equality
follows from  $q_1>a^*$. According to Lemma \ref{lem:onboundary},
$r$ is on the upper boundary of ${R}_{{\mu}}\left({X}\right)$.

If ${q}$ is on the upper boundary of ${R}_\mu \left(Z\right)$, the, because of symmetry, $r = \mu(X) - u -q$ is on the lower boundary of ${R}_\mu \left(Z\right)$. If $q_1\in \left[\mu_1(X)-u_1-a^*,\mu_1(X)-u_1\right]$, then $\mu_1(X)-u_1-q_1 \in \left[0,a^*\right]$. From the first part of the proof, $r = \mu(X)-u-q$ is on the lower boundary of ${R}_{{\mu}}\left({X}\right)$. If $q_1\in \left[0, \mu_1(X)-u_1 -a^* \right)$, then $\mu_1(X)-u_1-q_1 \in \left(a^*, \mu_1(X)-u_1 \right]$. Again, from the first part of the proof, $\mu(X)-u-(\mu(X)-u-q_1) = q_1$ is on the upper boundary of ${R}_{{\mu}}\left({X}\right)$.
\end{proof}

\begin{lemma}\label{lem:weakthm}
Under Assumption \ref{ass:abscon}, for any vector ${p}\in{R}_{{\mu}}\left({X}\right)$, there exists a maximal set ${Z}^{*}\in\mathcal S_{{p}}\left({X}\right)$ and, in addition, ${R}^{{p}}_\mu\left({X}\right)=Q^{{p}}_{\mu}\left({X}\right).$
\end{lemma}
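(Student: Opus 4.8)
The plan is to take the vector $p\in R_\mu(X)$, set $u=\mu(X)-p$, and apply Lemma~\ref{lem:Z_star} to produce $a^*\in[0,\mu_1(X)-u_1]$ with $\mu(M_{a^*,u_1})=u$; then the candidate maximal set is $Z^*=X\setminus M_{a^*,u_1}$, which satisfies $\mu(Z^*)=\mu(X)-u=p$, so $Z^*\in\mathcal S^p_\mu(X)$. The goal reduces to proving the two set equalities $R_\mu(Z^*)=Q^p_\mu(X)$ and $R^p_\mu(X)=Q^p_\mu(X)$; since $Z^*\in\mathcal S^p_\mu(X)$ gives $R_\mu(Z^*)\subseteq R^p_\mu(X)$ automatically, it suffices to show (I)~$R_\mu(Z^*)\supseteq Q^p_\mu(X)$ and (II)~$R^p_\mu(X)\subseteq Q^p_\mu(X)$, because together with the trivial inclusions these force all three sets to coincide and exhibit $Z^*$ as the maximal set.

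For inclusion (II), I would argue that for \emph{any} $Z\in\mathcal S^p_\mu(X)$ and any $W\subseteq Z$ we have $\mu(W)\in R_\mu(X)$ (trivially) and also $\mu(X)-(\mu(Z)-\mu(W))=\mu(W)+(\mu(X)-\mu(Z))=\mu(W)+u=\mu((X\setminus Z)\cup W)\in R_\mu(X)$ since $(X\setminus Z)$ and $W$ are disjoint; hence $\mu(W)\in R_\mu(X)\cap(R_\mu(X)-\{\mu(X)-p\})=Q^p_\mu(X)$, using $\mu(Z)=p$. This shows $R_\mu(Z)\subseteq Q^p_\mu(X)$ for every $Z\in\mathcal S^p_\mu(X)$, hence $R^p_\mu(X)\subseteq Q^p_\mu(X)$.

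For inclusion (I), the heart of the proof, I would show that the convex compactum $R_\mu(Z^*)$ contains $Q^p_\mu(X)$ by comparing boundaries. Both $R_\mu(Z^*)$ and $Q^p_\mu(X)$ are centrally symmetric planar convex bodies with the same center $\tfrac12 p$ (Lemma~\ref{lem:censym} for $R_\mu(Z^*)$, noting $\mu(Z^*)=p$, and Lemma~\ref{lem:censymsub} for $Q^p_\mu(X)$); so it is enough to check that every boundary point $q$ of $R_\mu(Z^*)$ lies in $Q^p_\mu(X)$. Take such a $q$, say on the lower boundary of $R_\mu(Z^*)$ (the upper-boundary case is symmetric via the central symmetry just noted). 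If $q_1\in[0,a^*]$, Lemma~\ref{lem:boundary} says $q$ is on the lower boundary of $R_\mu(X)$, hence $q\in R_\mu(X)$; and since $q\in R_\mu(Z^*)$ with $Z^*\in\mathcal S^p_\mu(X)$, part (II) already gives $q\in Q^p_\mu(X)$ — or more directly, $q\in R_\mu(X)$ and $p-q=\mu(Z^*\setminus W)+$ (appropriate bookkeeping) $\in R_\mu(X)$. If instead $q_1\in(a^*,\mu_1(X)-u_1]$, Lemma~\ref{lem:boundary} gives that $r=\mu(X)-u-q=p-q$ is on the upper boundary of $R_\mu(X)$; then $q=p-r$ with $r\in R_\mu(X)$ means $q\in\{p\}-R_\mu(X)=R_\mu(X)-\{\mu(X)-p\}$ by Lemma~\ref{lem:translation}, and also $q\in R_\mu(Z^*)\subseteq R_\mu(X)$, so $q\in Q^p_\mu(X)$. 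Thus the boundary of $R_\mu(Z^*)$ lies in $Q^p_\mu(X)$; since $Q^p_\mu(X)$ is convex and contains the common center $\tfrac12p$ (as $\tfrac12 p\in R_\mu(Z^*)$ too), and since a convex set containing the boundary of a convex body concentric with it contains that body, we get $R_\mu(Z^*)\supseteq Q^p_\mu(X)$, completing (I).

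The main obstacle is the boundary-matching argument in (I): one must carefully track the two regimes $q_1\le a^*$ and $q_1>a^*$ coming from Lemma~\ref{lem:boundary}, correctly translate the statements ``$q$ on the lower/upper boundary of $R_\mu(X)$'' and ``$p-q$ on the boundary of $R_\mu(X)$'' into membership in the shifted set $R_\mu(X)-\{\mu(X)-p\}$ via Lemma~\ref{lem:translation}, and then invoke the elementary convexity fact that a concentric convex set containing the full boundary of a convex body contains the body. A minor subtlety is handling the case $\mu_1(X)=u_1$ (i.e.\ $p_1=0$), where the interval $(a^*,\mu_1(X)-u_1]$ degenerates; there the range $R_\mu(Z^*)$ is a segment and the verification is immediate.
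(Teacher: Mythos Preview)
Your setup and inclusion (II) are correct; in fact your direct argument for (II) (via the disjoint union $(X\setminus Z)\cup W$) is cleaner than the paper's contradiction argument. The overall architecture---prove $R_\mu(Z^*)\subseteq R^p_\mu(X)\subseteq Q^p_\mu(X)\subseteq R_\mu(Z^*)$---matches the paper exactly.

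However, your argument for inclusion (I) contains a genuine logical error. You show that every boundary point $q$ of $R_\mu(Z^*)$ lies in $Q^p_\mu(X)$, and then invoke ``a convex set containing the boundary of a convex body concentric with it contains that body.'' But that principle, applied to your situation, yields $Q^p_\mu(X)\supseteq R_\mu(Z^*)$, which is the \emph{opposite} of the inclusion you need (and is in any case already known from (II)). Showing $\partial R_\mu(Z^*)\subseteq Q^p_\mu(X)$ simply cannot yield $Q^p_\mu(X)\subseteq R_\mu(Z^*)$.

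The repair is to use the \emph{full strength} of Lemma~\ref{lem:boundary}, not merely that the boundary points of $R_\mu(Z^*)$ lie in $Q^p_\mu(X)$, but that they lie on the \emph{boundary} of one of the two sets $R_\mu(X)$ or $R_\mu(X)-\{\mu(X)-p\}$ whose intersection is $Q^p_\mu(X)$. Concretely: if $q$ is the lower boundary point of $R_\mu(Z^*)$ at abscissa $q_1$, then in the regime $q_1\le a^*$ it is on the lower boundary of $R_\mu(X)$, so no point strictly below $q$ belongs to $R_\mu(X)\supseteq Q^p_\mu(X)$; in the regime $q_1>a^*$, $p-q$ is on the upper boundary of $R_\mu(X)$, so (via Lemma~\ref{lem:translation}) $q$ is on the lower boundary of the shifted set $R_\mu(X)-\{\mu(X)-p\}\supseteq Q^p_\mu(X)$, and again nothing strictly below $q$ lies in $Q^p_\mu(X)$. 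Combined with the symmetric statement for the upper boundary, this forces every vertical slice of $Q^p_\mu(X)$ to lie between the lower and upper boundaries of $R_\mu(Z^*)$, i.e.\ $Q^p_\mu(X)\subseteq R_\mu(Z^*)$. The paper carries out exactly this reasoning, phrased as a proof by contradiction: assume $q\in Q^p_\mu(X)\setminus R_\mu(Z^*)$, compare with the nearest boundary point $q^u$ (or $q^l$) of $R_\mu(Z^*)$ on the same vertical, and use Lemma~\ref{lem:boundary} to force $q\notin Q^p_\mu(X)$.
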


\begin{proof}
For $u=\mu(X)-p$, consider $a^*$ defined in Lemma \ref{lem:Z_star}. For $Z^*=X\setminus {M}_{a^{*},\mu_1(X)-p_{1}}$, the following three statements are true: (1) ${R}_{{\mu}}\left({Z}^{*}\right)\subseteq{R}^{{p}}_\mu\left({X}\right)$;
(2) ${R}^{{p}}_\mu\left({X}\right)\subseteq Q^{{p}}_{\mu}\left({X}\right)$;
(3) $Q^{{p}}_{\mu}\left({X}\right)\subseteq{R}_{{\mu}}\left({Z}^{*}\right)$.

For (1), $\mu\left(Z^{*}\right)=\mu\left({X}\setminus{M}_{a^{*},\mu_1(X)-p_{1}}\right)=\mu(X) - \mu\left({M}_{a^{*},\mu_1(X)-p_{1}}\right) = \mu(X) - \left(\mu(X) - p \right) = p$, where the second to the last equality follows from Lemma \ref{lem:Z_star}. Thus, $R_\mu \left(Z^*\right) = R_\mu ^p (X)$.

For (2), assume that there exists a vector ${q}\in{R}^{{p}}_\mu\left({X}\right)$
such that ${q}\notin Q^{{p}}_{\mu}\left({X}\right)$.
Then Definition \ref{def:sets} implies that either ${q}\notin{R}_{{\mu}}\left({X}\right)$
or ${q}\notin\left({R}_{{\mu}}\left({X}\right)-\left\{ {\mu}\left({X}\right)-{p}\right\} \right)$.
However $q\in R_\mu ^p (X) \subseteq R_\mu (X)$. Therefore, ${q}\notin{R}_{{\mu}}\left({X}\right)-\left\{ {\mu}\left({X}\right)-{p}\right\} $, which is equivalent to $p-q \notin \left\{\mu(X) \right\} - R_\mu(X) = R_\mu(X)$, where the equality follows from Lemma \ref{lem:censym}. Since ${R}^{{p}}_\mu(X) \subseteq R_\mu(X)$, we have $p-q \notin  R_\mu^p(X)$. By Lemma \ref{lem:censymsub}, $R_\mu^p(X)$ is centrally symmetric with the center $\frac{p}{2}$. Therefore $q\notin R_\mu^p(X)$. The above contradiction implies (2).

For (3), assume ${q}\in Q^{{p}}_{\mu}\left({X}\right)$,
but ${q}\notin{R}_{{\mu}}\left({Z}^{*}\right)$. By Lyapunov's theorem $R_\mu\left(Z^*\right)$ is a convex compactum. Let ${q}^{u}=\left(q_{1},q^u_{2}\right)$
and ${q}^{l}=\left(q_{1},q^l_{2}\right)$ be the intersection
points of the vertical line $\mu_{1}=q_{1}$ and the upper and lower boundaries
of ${R}_{{\mu}}\left({Z}^{*}\right)$ respectively. Then one
of the following must be true: $q_{2}>q^u_{2}$ or
$q_{2}<q^l_{2}$. Without loss of generosity, we consider
the former case. Since
$q_{u}$ is on the upper boundary of ${R}_{{\mu}}\left({Z}^{*}\right)$,
according to Lemma \ref{lem:boundary}, one of the following is true:
(a) ${q}^{u}$ is on the upper boundary of ${R}_{{\mu}}\left({X}\right)$
or (b) $r={p}-{q}^{u}$ is on the lower boundary
of ${R}_{{\mu}}\left({X}\right)$. For (a),
$q_{2}>q^u_{2}$ implies ${q}\notin{R}_{{\mu}}\left({X}\right)$.
Thus ${q}\notin Q^{{p}}_{\mu}\left({X}\right)$. This contradicts our assumption. For (b), we let $r'={p}-{q}$. Obviously, $r'_1=r_1$ and $r'_2<r_2$. This implies that $r'$ is below the lower boundary point $r$. Thus, $r' \notin{R}_{{\mu}}\left({X}\right)$ and $r' \notin Q^{{p}}_{\mu}\left({X}\right)$.
But according to Lemma \ref{lem:censymsub}, this means ${q}\notin Q^{{p}}_{\mu}\left({X}\right)$,
which contradicts to our assumption. Statement (1)-(3) imply the lemma.
\end{proof}

Let $D$ be a two-by-two invertible matrix with positive entries, and $A\subseteq \mathbb{R}^2$. We denote by $A D$ the set $ \left\{ p D: p\in A \right\}$. For a vector measure $\mu = \left(\mu_1,\mu_2\right)$, let $\nu=\mu D$ be the vector measure $\left(\nu_1,\nu_2\right) = \left(D_{11}\mu_1 + D_{21} \mu_2 , D_{12}\mu_1 + D_{22} \mu_2 \right)$. Then the measure $\nu_1$ and $\nu_2$ are equivalent.

\begin{lemma}\label{lem:mapping}
(a) $R_\mu(Y)D=R_\nu(Y)$ for all $Y \in \mathcal{F}$; (b) $R_\mu^p(X) D = R_\nu^{pD}(X)$ for all $p\in R_\mu(X)$; (c) $Q_\mu^p(X) D = Q_\nu^{pD}(X)$ for all $p\in R_\mu(X)$.
\end{lemma}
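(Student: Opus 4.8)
The plan is to reduce all three statements to the single identity
\[
\nu(Z)=\mu(Z)D\qquad\text{for all }Z\in\mathcal{F},
\]
which is nothing but the definition of $\nu=\mu D$ read coordinatewise: $\nu_1(Z)=D_{11}\mu_1(Z)+D_{21}\mu_2(Z)$ and $\nu_2(Z)=D_{12}\mu_1(Z)+D_{22}\mu_2(Z)$ together say exactly that the row vector $\mu(Z)=(\mu_1(Z),\mu_2(Z))$, multiplied on the right by $D$, equals $\nu(Z)$. Alongside this I would record the elementary properties of the map $T\colon\mathbb{R}^2\to\mathbb{R}^2$, $T(q)=qD$: since $D$ is invertible, $T$ is a linear bijection, so $T$ commutes with arbitrary unions and with intersections, and $T(S-\{r\})=T(S)-\{T(r)\}$ for every $S\subseteq\mathbb{R}^2$ and every $r\in\mathbb{R}^2$ (the last equality by linearity). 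These are the only facts about $D$ that the proof uses.

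For part~(a), I would simply compute, for any $Y\in\mathcal{F}$,
\[
R_\nu(Y)=\{\nu(Z):Z\in\mathcal{F},\ Z\subseteq Y\}=\{\mu(Z)D:Z\subseteq Y\}=\{qD:q\in R_\mu(Y)\}=R_\mu(Y)D.
\]
For part~(b), I would first observe that, because $T$ is a bijection, $\mu(Z)=p$ holds if and only if $\nu(Z)=\mu(Z)D=pD$; since $pD\in R_\nu(X)$ by~(a), this yields $\mathcal{S}^{p}_\mu(X)=\mathcal{S}^{pD}_\nu(X)$. Then, using~(a) together with the commutation of $T$ with unions,
\begin{align*}
R^{pD}_\nu(X)&=\bigcup_{Z\in\mathcal{S}^{pD}_\nu(X)}R_\nu(Z)=\bigcup_{Z\in\mathcal{S}^{p}_\mu(X)}R_\mu(Z)D\\
&=\Bigl(\bigcup_{Z\in\mathcal{S}^{p}_\mu(X)}R_\mu(Z)\Bigr)D=R^{p}_\mu(X)D.
\end{align*}

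For part~(c), I would use $\nu(X)-pD=\mu(X)D-pD=(\mu(X)-p)D$ (linearity of $T$) and $R_\nu(X)=R_\mu(X)D$ (part~(a)), and then push $D$ through the shift and the intersection in the definition of $Q^{pD}_\nu(X)$:
\begin{align*}
Q^{pD}_\nu(X)&=\bigl(R_\nu(X)-\{\nu(X)-pD\}\bigr)\cap R_\nu(X)\\
&=\bigl(R_\mu(X)D-\{(\mu(X)-p)D\}\bigr)\cap R_\mu(X)D\\
&=\bigl((R_\mu(X)-\{\mu(X)-p\})\cap R_\mu(X)\bigr)D=Q^{p}_\mu(X)D.
\end{align*}
There is no substantive obstacle here: each part is a one-line computation once the identity $\nu(Z)=\mu(Z)D$ is in hand, and the only thing to watch is which property of $D$ is invoked where — linearity for $\nu(\cdot)=\mu(\cdot)D$ and for the shift/Minkowski-difference manipulations, and invertibility so that $T$ is bijective and hence preserves the families $\mathcal{S}^{p}$ and commutes with unions and intersections. (The positivity of the entries of $D$ is not needed for this lemma; it is what guarantees that $\nu_1$ and $\nu_2$ are equivalent, which is why it is imposed, so that the lemma can later be combined with the results proved under Assumption~\ref{ass:abscon}.)
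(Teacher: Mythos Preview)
Your proof is correct and follows essentially the same approach as the paper: both reduce everything to the identity $\nu(Z)=\mu(Z)D$ and then push the linear bijection $T(q)=qD$ through the set-theoretic operations defining $R_\mu$, $R_\mu^p$, and $Q_\mu^p$. The paper writes out parts~(a) and~(b) as explicit two-way inclusions via element chasing, whereas you package the same content as a chain of equalities after first recording that $T$ is bijective and hence preserves unions, intersections, and shifts; part~(c) is identical in both.
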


\begin{proof}
(a) For any point $q \in R_\nu(Y) $, there exists a set $Z\in \mathcal{F}_Y$ such that $\nu(Z)=q$. Since $\mu(Z)=q D^{-1}$ and $q D^{-1} \in R_\mu(Y)$, we have $q \in R_\mu(Y) D $. For any point $q \in R_\mu(Y) D $, we have $q D^{-1} \in R_\mu(Y)$. Thus there exists a set $Z\in \mathcal{F}_Y$ such that $\mu(Z)=q D^{-1}$, and $\nu(Z)=q$. Therefore, $\nu(Z) \in R_\nu(Y)$.

(b) For any point $q \in R_\nu^{p D}(X)$, there exist sets $Y\in\mathcal{F}$ and $Z\in\mathcal{F}_Y$ such that $\nu(Y) = p D$ and $\nu(Z) = q$. So $\mu(Y) = p$ and $\mu(Z) = q D^{-1}$. Thus, $q D^{-1} \in R_\mu^p(X)$ and therefore, $q \in R_\mu^p(X) D$. For any point $q \in R_\mu^p(X)D$, we have $q D^{-1} \in R_\mu^p(X)$. So there exist sets $Y\in\mathcal{F}$ and $Z\in\mathcal{F}_Y$ such that $\mu(Y) = p$ and $\mu(Z) = q D^{-1}$, and consequently $\nu(Y) = p D$ and $\nu(Z) = q$. Thus $q \in R_\mu^{p D}(X)$.

(c) According to Definition \ref{def:sets}, $Q_\mu^p(X) D = (R_\mu(X) D-\{\mu (X)D-{p}D\})\cap{R}_{{\mu}}(X)D = (R_\nu(X)-\{\nu (X)-{p}D\})\cap{R}_{{\nu}}(X) = Q_\nu^{pD}(X)$.
\end{proof}

\begin{proof}[Proof of Theorem \ref{thm1}]
According to Lemma \ref{lem:weakthm}, Theorem \ref{thm1} holds under Assumption \ref{ass:abscon} that states that $\mu_1$ and $\mu_2$ are equivalent. If $\mu_1$ and $\mu_2$ are not equivalent, consider $\nu=\mu D$. Since $\nu_1$ and $\nu_2$ are equivalent, $Q^p_\mu (X) =  Q^{pD}_{\nu}(X) D^{-1} = R_\nu^{pD}(X) D^{-1} = R_\mu^p(X)$,
where the first equality and the last equality is by Lemma \ref{lem:mapping}, and the second equality is due to Lemma \ref{lem:weakthm}.
Furthermore, according to Lemma \ref{lem:weakthm}, there exists a maximal set $Z^*$, such that $R_\nu\left(Z^*\right) = R_\nu^{pD}(X)$. Therefore, $R_\mu\left(Z^*\right) = R_\nu\left(Z^*\right) D^{-1} =  R^{pD}_{\nu}(X) D^{-1} = R^p_\mu (X)$.
\end{proof}

Now consider Theorem \ref{thm1.5}. For $A\subseteq \mathbb{R}^m$
and $b\in\mathbb{R}^m$, let $A+b = \left\{a+b:a\in A \right\}$ and
$A-b = A+(-b)$. Observe that $A-b=A-\{b\}$. Recall that $A\oplus B
= \bigcup_{b\in B} \left( A + b \right) $ is called the Minkowski
addition, and $A \ominus B = \bigcap_{b\in B} \left(A-b\right)$ is
called the Minkowski subtraction, where $A,B \subseteq
\mathbb{R}^m$.
%
\begin{lemma}\label{lem:seesaw}
Let $A_1,A_2,B_1,B_2 \subseteq \mathbb{R}^2$ be convex and compact sets such that $A_1 \oplus B_1 = A_2 \oplus B_2 $ and $B_1 \subseteq B_2$. Then $A_2 \subseteq A_1$.
\end{lemma}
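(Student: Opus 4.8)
The plan is to pass to support functions, which turn the Minkowski identity into a pointwise equation of functions on $\mathbb{R}^2$ and turn the inclusion $B_1\subseteq B_2$ into an inequality that transfers directly to $A_1$ and $A_2$. (Throughout, the four sets are understood to be nonempty, as they are in every application in this paper, being ranges of vector measures and hence containing $0$; without this the statement is false.) For a nonempty compact convex $K\subseteq\mathbb{R}^2$ write $h_K(v)=\max_{x\in K}\langle v,x\rangle$ for its support function. I would rely on two standard facts: (i) $h_{A\oplus B}=h_A+h_B$ for nonempty $A,B$, immediate because the supremum of $\langle v,a+b\rangle$ over $a\in A,\ b\in B$ splits as a sum; and (ii) for nonempty compact convex $K,L$, $K\subseteq L$ holds if and only if $h_K\le h_L$ pointwise, where the nontrivial implication uses that the closed convex set $L$ is the intersection of the half-spaces $\{x:\langle v,x\rangle\le h_L(v)\}$, $v\in\mathbb{R}^2$.

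The rest is short. By (i), the hypothesis $A_1\oplus B_1=A_2\oplus B_2$ gives $h_{A_1}+h_{B_1}=h_{A_2}+h_{B_2}$, hence $h_{A_2}-h_{A_1}=h_{B_1}-h_{B_2}$ on all of $\mathbb{R}^2$. By (ii), $B_1\subseteq B_2$ makes the right-hand side nonpositive, so $h_{A_2}\le h_{A_1}$ pointwise; applying (ii) once more (with $L=A_1$, which is compact convex by hypothesis) yields $A_2\subseteq A_1$.

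I do not expect a genuine obstacle here; the only delicate point is the nonemptiness convention noted above. If one prefers to keep the section self-contained and avoid support functions, an alternative is R\aa dstr\"om's cancellation lemma: from $B_1\subseteq B_2$ we get $A_2\oplus B_1\subseteq A_2\oplus B_2=A_1\oplus B_1$, and cancelling the nonempty bounded set $B_1$ against the closed convex set $A_1$ gives $A_2\subseteq A_1$. The cancellation is itself provable in a few lines: given $a\in A_2$ and $c_0\in B_1$, iteratively write $a+c_{k-1}=b_k+c_k$ with $b_k\in A_1,\ c_k\in B_1$ for $k=1,\dots,n$, sum and divide by $n$ to obtain $a+\frac1n(c_0-c_n)=\frac1n\sum_{k=1}^n b_k\in A_1$, and let $n\to\infty$, using that $B_1$ is bounded and $A_1$ is closed.
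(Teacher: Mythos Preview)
Your proof is correct. The paper takes a different route: it invokes the Minkowski-subtraction identity $(A\oplus B)\ominus B=A$ for compact convex sets (cited from Schneider) and argues that if $a\in A_2$ then $a\in(A_2\oplus B_2)\ominus B_2=(A_1\oplus B_1)\ominus B_2\subseteq(A_1\oplus B_1)\ominus B_1=A_1$, the inclusion coming from $B_1\subseteq B_2$ via the definition of $\ominus$. Your support-function argument is cleaner and entirely self-contained, needing no external reference; it also makes the cancellation phenomenon transparent as a pointwise subtraction of real-valued functions. Your R\aa dstr\"om alternative is essentially a hands-on proof of the very identity the paper cites, with the bonus that it isolates exactly which hypotheses (convexity and closedness of $A_1$, boundedness and nonemptiness of $B_1$) are doing the work. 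The paper's version has the minor advantage that the $\ominus$ notation is reused in the corollary that immediately follows, so its appearance here is not wasted. Your remark about nonemptiness is well taken; the paper leaves this implicit.
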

\begin{proof}
According to \cite[Lemma 3.1.8]{Schneider:2008}, if $A,B\subseteq \mathbb{R}^2$ are convex and compact sets then $(A \oplus B) \ominus B = A$. Thus if $a\in A_2$, then $a \in \left( A_2 \oplus B_2 \right) \ominus B_2$, and consequently $a \in \left( A_1 \oplus B_1 \right) \ominus B_2$. So $a \in \left( A_1 \oplus B_1 \right)-b $, for any $b\in B_2$. Since $B_1 \subseteq B_2$, we have $a \in \left( A_1 \oplus B_1 \right)-b $, for any $b\in B_1$, and thus $a \in \left( A_1 \oplus B_1 \right) \ominus B_1 = A_1$.
\end{proof}

\begin{proof}[Proof of Theorem \ref{thm1.5}]
Now let $Z^*$ be the maximal set with the measure $p$, then $\mu\left(X\setminus Z^*\right) = \mu(X) - p $. Consider any set $M$, such that $\mu(M)=\mu(X)-p$. Obviously, $R_\mu(M) \oplus R_\mu(X \setminus M) = R_\mu\left(X \setminus Z^*\right) \oplus R_\mu\left( Z^* \right) = R_\mu(X)$. In addition, $R_\mu(X \setminus M) \subseteq R_\mu\left( Z^* \right)$ by definition. Thus according to Lemma \ref{lem:seesaw}, $R_\mu\left(X \setminus Z^*\right) \subseteq R_\mu(M)$.

Similarly, let $M^*=X \setminus Z^*$ be the minimal set with the measure $\mu(X)-p$, then $\mu\left(Z^*\right) = p $. Consider any set $Z$, such that $\mu(Z)=p$. Obviously, $R_\mu(Z) \oplus R_\mu{(X \setminus Z)} = R_\mu\left(Z^*\right) \oplus R_\mu\left( X \setminus  Z^* \right) = R_\mu(X)$. In addition, $R_\mu(X \setminus Z^*) = R_\mu(M^*) \subseteq R_\mu\left(X \setminus  Z \right)$ by definition. Thus according to Lemma \ref{lem:seesaw}, $R_\mu\left(Z\right) \subseteq R_\mu(Z^*)$.
\end{proof}

With Theorem \ref{thm1.5}, the existence of the minimal subset ${M}^{*}\in\mathcal S_\mu^{{q}}\left({X}\right)$ immediately follows from the existence of the maximal subset ${Z}^{*}\in\mathcal S_\mu^{\mu(X) - q}\left({X}\right)$. Furthermore, ${R}_\mu\left({M}^{*}\right)= \left( {R}_\mu\left({M}^{*}\right) \oplus {R}_\mu\left({Z}^{*}\right) \right)\ominus {R}_\mu\left({Z}^{*}\right) =  R_\mu(X) \ominus Q^{\mu(X) - {q}}_{\mu}\left({X}\right)$.

\begin{corollary}
For a two-dimensional finite atomless vector measure
${\mu}=\left(\mu_{1},\mu_{2}\right)$ and for a vector
${q}\in{R}_{{\mu}}\left({X}\right)$, there exists a minimal set
${M}^{*}\in\mathcal S_\mu^{{q}}\left({X}\right)$. In addition,
${R}_\mu\left({M}^{*}\right)=
R_\mu(X) \ominus Q^{\mu(X) - {q}}_{\mu}\left({X}\right)$.
\end{corollary}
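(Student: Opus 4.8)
The plan is to derive the Corollary directly from Theorems~\ref{thm1} and~\ref{thm1.5}, together with the Minkowski cancellation law already invoked in the proof of Lemma~\ref{lem:seesaw}. First I would apply Theorem~\ref{thm1} to the vector $\mu(X)-q$; this vector belongs to $R_\mu(X)$ because $R_\mu(X)$ is centrally symmetric about $\tfrac12\mu(X)$ by Lemma~\ref{lem:censym}. Theorem~\ref{thm1} then yields a maximal set $Z^*\in\mathcal S_\mu^{\mu(X)-q}(X)$ with $R_\mu(Z^*)=R_\mu^{\mu(X)-q}(X)=Q_\mu^{\mu(X)-q}(X)$, the latter being a convex compactum. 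By Theorem~\ref{thm1.5}, $M^*:=X\setminus Z^*$ is the minimal subset of $X$ with measure $\mu(X)-(\mu(X)-q)=q$, which establishes the existence claim, namely that $M^*\in\mathcal S_\mu^{q}(X)$ and $R_\mu(M^*)\subseteq R_\mu(M)$ for every $M\in\mathcal S_\mu^{q}(X)$.

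For the identity, I would first record the elementary fact that $R_\mu(M^*)\oplus R_\mu(Z^*)=R_\mu(X)$, valid for the disjoint decomposition $X=M^*\cup Z^*$: every $\mu(B)$ with $B\subseteq X$ equals $\mu(B\cap M^*)+\mu(B\cap Z^*)$ with the two summands in $R_\mu(M^*)$ and $R_\mu(Z^*)$ respectively, while conversely $\mu(B_1)+\mu(B_2)=\mu(B_1\cup B_2)\in R_\mu(X)$ whenever $B_1\subseteq M^*$ and $B_2\subseteq Z^*$. All three ranges are convex and compact by Lyapunov's theorem. Applying the cancellation law $(A\oplus B)\ominus B=A$ from \cite[Lemma 3.1.8]{Schneider:2008} with $A=R_\mu(M^*)$ and $B=R_\mu(Z^*)$ gives $R_\mu(M^*)=\left(R_\mu(M^*)\oplus R_\mu(Z^*)\right)\ominus R_\mu(Z^*)=R_\mu(X)\ominus R_\mu(Z^*)$, and substituting $R_\mu(Z^*)=Q_\mu^{\mu(X)-q}(X)$ yields the asserted formula $R_\mu(M^*)=R_\mu(X)\ominus Q_\mu^{\mu(X)-q}(X)$.

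I do not anticipate a real obstacle: the substantive work is entirely contained in Theorems~\ref{thm1} and~\ref{thm1.5}, and the only point requiring a moment's care is that the Minkowski cancellation needs $R_\mu(Z^*)$ to be convex and compact, which is precisely guaranteed by Theorem~\ref{thm1} through the identification $R_\mu(Z^*)=Q_\mu^{\mu(X)-q}(X)$. The condensed form of this argument is already given in the remark preceding the Corollary, so the write-up merely spells out those two lines.
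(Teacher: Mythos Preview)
Your proposal is correct and follows exactly the same route as the paper: obtain the maximal set $Z^*$ for $\mu(X)-q$ via Theorem~\ref{thm1}, deduce that $M^*=X\setminus Z^*$ is minimal via Theorem~\ref{thm1.5}, and then apply the Minkowski cancellation $(A\oplus B)\ominus B=A$ to $R_\mu(M^*)\oplus R_\mu(Z^*)=R_\mu(X)$. The paper compresses this into the two-line remark immediately preceding the Corollary, so your write-up simply unpacks that remark with the additional justifications (central symmetry to ensure $\mu(X)-q\in R_\mu(X)$, the explicit verification of the Minkowski sum identity, and the convexity/compactness hypotheses for the cancellation law).
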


\section{Counterexample for 3D measures}\label{s4}

In this section, we present an example of a measurable
space $(X,\mathcal{F})$ endowed with a three-dimensional atomless
finite measure $\nu=\left(\nu_{1},\nu_{2},\nu_{3}\right)$ and a vector
$p\in R_{\nu}(X)$ such that a maximal subset of $X$ with the measure $p$ does
not exist.
Theorem \ref{thm1.5} implies that the minimum set does not exist either in this example.

Recall that, with respect to a measure $\mu$, set $A$ and $B$ are
said to be equal up to null sets (denoted by $A\simeq B$) if $\mu\left(A\setminus B\right)=\mu\left(B\setminus A\right)=0$. Also recall that $X^l=\left\{f(x)=l\right\}$.

\begin{proposition}\label{lem:unique}
Let $\mu=\left(\mu_1,\mu_2\right)$ satisfy Assumption
\ref{ass:abscon} and let $Y \in \mathcal{F}$. If $\mu_{1}\left\{
X^{l_{\mu_{1}(Y)}}\right\} =0$ and
$\mu_{2}(Y)=\mu_{2}\left(L_{\mu_{1}(Y)}\right)$, then $Y\simeq
L_{\mu_{1}(Y)}$.
\end{proposition}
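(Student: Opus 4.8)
The plan is to show that $L_{\mu_1(Y)}$ is, up to null sets, the \emph{unique} minimizer of $\mu_2$ among all measurable subsets of $X$ with $\mu_1$-measure equal to $\mu_1(Y)$, under the hypothesis $\mu_1(X^{l_{\mu_1(Y)}})=0$. Write $q_1 = \mu_1(Y)$ and $l = l_{q_1}$ for brevity. Following the argument in the proof of Lemma \ref{lem:onboundary}, decompose $Y$ into the disjoint sets $Y_1 = Y \setminus L_{q_1}$, $Y_2 = L_{q_1}\setminus Y$, and $N = Y\cap L_{q_1}$; then $\mu_1(Y_1)=\mu_1(Y_2)$ since $\mu_1(Y)=q_1=\mu_1(L_{q_1})$, and the inequality chain
\[
\mu_2(Y_1) = \int_{Y_1} f\, d\mu_1 \ge l\,\mu_1(Y_1) = l\,\mu_1(Y_2) \ge \int_{Y_2} f\, d\mu_1 = \mu_2(Y_2)
\]
holds because $Y_1 \subseteq \{f(x)\ge l\}$ (as $Y_1$ is disjoint from $L_{q_1}\supseteq\{f<l\}$, so $f\ge l$ on $Y_1$ off a null set, using $\mu_1(X^l)=0$ to handle $\{f=l\}$) and $Y_2\subseteq L_{q_1}\subseteq\{f\le l\}$. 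Adding $\mu_2(N)$ to both ends gives $\mu_2(Y) = \mu_2(Y_1)+\mu_2(N)\ge \mu_2(Y_2)+\mu_2(N)=\mu_2(L_{q_1})$, with equality exactly when $\mu_2(Y_1)=\mu_2(Y_2)$.

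Since by hypothesis $\mu_2(Y)=\mu_2(L_{q_1})$, we have $\mu_2(Y_1)=\mu_2(Y_2)$, which forces \emph{equality} in the chain above: in particular $\int_{Y_1} f\,d\mu_1 = l\,\mu_1(Y_1)$ and $\int_{Y_2} f\,d\mu_1 = l\,\mu_1(Y_2)$. From the first equality, $\int_{Y_1}(f - l)\,d\mu_1 = 0$ with $f-l\ge 0$ on $Y_1$ (off a $\mu_1$-null set), so $f = l$ $\mu_1$-a.e.\ on $Y_1$; hence $Y_1 \subseteq X^l$ up to a $\mu_1$-null set, and since $\mu_1(X^l)=0$ we get $\mu_1(Y_1)=0$. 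Symmetrically, from $\int_{Y_2}(l-f)\,d\mu_1=0$ with $l-f\ge 0$ on $Y_2$, we obtain $f=l$ $\mu_1$-a.e.\ on $Y_2$, so $\mu_1(Y_2)=0$ as well. Because $\mu_1$ and $\mu_2$ are equivalent (Assumption \ref{ass:abscon}), $\mu_2(Y_1)=\mu_2(Y_2)=0$ too, so $Y_1$ and $Y_2$ are null for the vector measure $\mu$; that is, $\mu(Y\setminus L_{q_1}) = \mu(Y_1) = 0$ and $\mu(L_{q_1}\setminus Y) = \mu(Y_2)=0$, which is precisely $Y \simeq L_{q_1} = L_{\mu_1(Y)}$.

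The main obstacle to watch is the careful handling of the set $\{f = l\}$: the decomposition gives $Y_1 \subseteq \{f \ge l\}$ and $Y_2 \subseteq \{f \le l\}$ only because $\{f < l\}\subseteq L_{q_1} \subseteq \{f\le l\}$ from Lemma \ref{lem:L_q}(b), and the conclusion that $\mu_1(Y_1)=\mu_1(Y_2)=0$ crucially uses the assumption $\mu_1(X^{l})=0$ — without it, $Y_1$ and $Y_2$ could have positive measure inside $X^l$ and the proposition would fail (the $W_c$-component of $L_{q_1}$ could be swapped for a different subset of $X^l$ of the same $\mu_1$-measure). Everything else is a direct consequence of the equality case in the chain of inequalities already established in Lemma \ref{lem:onboundary}.
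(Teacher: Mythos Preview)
Your proof is correct and follows essentially the same approach as the paper: the same decomposition $Y_1=Y\setminus L_{q_1}$, $Y_2=L_{q_1}\setminus Y$, $N=Y\cap L_{q_1}$ and the same inequality chain from Lemma~\ref{lem:onboundary}. The only cosmetic difference is that the paper argues by contradiction---observing that $\mu_1(X^l)=0$ forces $L_{q_1}=\{f<l\}$ exactly, so $Y_2\subseteq\{f<l\}$ strictly, whence $\mu_1(Y_2)>0$ yields a \emph{strict} inequality $\mu_2(Y)>\mu_2(L_{q_1})$---whereas you run the equality case directly and invoke $\mu_1(X^l)=0$ only at the end; both uses of the hypothesis are equivalent.
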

\begin{proof}
Assume that $Y\simeq L_{\mu_1(Y)}$ does not hold. We define three disjoint sets
${Z}_{1}={Y}\setminus{L}_{\mu_1(Y)}$,
${Z}_{2}={L}_{\mu_1(Y)}\setminus{Y}$, and $M={Y} \cap {L}_{\mu_1(Y)}$.
 Observe that $Y=Z_1\cup M$ and
 ${L}_{\mu_1(Y)}=Z_2\cup M$. These equalities and  $\mu_1(Y)=\mu_1(L_{\mu_1(Y)})$ imply $\mu_1(Z_1)=\mu_1(Z_2)$.
 Furthermore, $Z_1\subseteq \left\{f(x) \ge l_{\mu_1(Y)} \right\}$ and
 $Z_2\subseteq \left\{ f(x) <  l_{\mu_1(Y)} \right\}$, because according to (\ref{eq:L_a}), ${L}_{\mu_1(Y)}=
 \left\{ f\left(x\right)<l_{\mu_1(Y)}\right\}$ when $\mu_{1}\left\{ X^{l_{\mu_{1}(Y)}}\right\} =0$. Therefore,
\begin{eqnarray*}
\mu_{2}\left({Z}_{1}\right) &=& \int_{{Z}_{1}}f\left(x\right)\mu_{1}\left(dx\right)
 \ge l_{\mu_1(Y)}\int_{{Z}_{1}}\mu_{1}\left(dx\right) \\
 &=& l_{\mu_1(Y)}\int_{{Z}_{2}}\mu_{1}\left(dx\right)
 >  \int_{{Z}_{2}}f\left(x\right)\mu_{1}\left(dx\right)=\mu_{2}\left({Z}_{2}\right).
\end{eqnarray*}
So $\mu_{2}\left({Y}\right)= \mu_{2}\left({Z_1}\right) + \mu_{2}\left({M}\right) > \mu_{2} \left(Z_2\right)  + \mu_{2} (M)  = \mu_{2} \left({L}_{\mu_1(Y)}\right)$. This contradiction implies the proposition.
\end{proof}
\begin{example} \label{exa:counter}
Let $X=[0,1]$ and $\mathcal{F}$ is the Borel $\sigma$-field.
Consider the three-dimensional vector measure
${\nu}\left(dx\right)=\left(\nu_{1},\nu_{2},\nu_{3}\right)\left(dx\right)=\left(1,2x,\rho\left(x\right)\right)dx$,
where
\[
\rho \left(x\right)=\left\{ \begin{array}{ll}
4x, & \textrm{if } x\in\left[0,\frac{1}{2}\right);\\
4x-2, & \textrm{if } x\in\left[\frac{1}{2},1\right].\end{array}\right.
\]
%
%
Consider the points ${p}=\left(\frac{1}{2},\frac{1}{2},\frac{1}{2}\right)$,
${q^1}=\left(\frac{1}{4},\frac{1}{16},\frac{1}{8}\right)$, and ${q^2}=\left(\frac{1}{4},\frac{5}{32},\frac{1}{16}\right)$. It is easy to show that $q^1,q^2\in R_\nu^p(X)$. Indeed let $Z^1=\left[0,\frac{1}{4}\right) \cup\left[\frac{3}{4},1\right]$, $Z^2=\left[0,\frac{1}{8}\right)\cup\left[\frac{3}{8},\frac{5}{8}\right)\cup\left[\frac{7}{8},1\right]$, $W^1=\left[0,\frac{1}{4}\right)\subseteq Z^1$, and $W^2=\left[0,\frac{1}{8}\right)\cup \left[\frac{1}{2},\frac{5}{8}\right)\subseteq Z^2$, and we have $\nu\left(Z^1\right)=\nu\left(Z^2\right)={p}$, $\nu\left(W^1\right)=q^1$, and $\nu\left(W^2\right)=q^2$. Since $Z^1$ and $Z^2$ are not equal up to a null set, Proposition \ref{pro:Z} implies that there doesn't exist a set $Z$ such that $\nu(Z)=p$ and $q^1,q^2\in R_\mu(Z)$.
\end{example}
\begin{proposition}\label{pro:Z}
Consider the sets $X$, $Z^1$, $Z^2$, the measure $\nu$ and vectors $p$, $q^1$, $q^2$ from Example \ref{exa:counter}. Let $Z \in \mathcal{S}_\nu^p(X)$. For each $i=1,2$, if $q^i \in R_\nu(Z)$, then $Z \simeq Z^i$.
\end{proposition}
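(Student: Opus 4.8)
The plan is to invoke the uniqueness Proposition~\ref{lem:unique} twice: once on $X$ to pin down the part of $Z$ forced by $q^i$, and once on a measurable subspace, applied to the complement $X\setminus Z$, to pin down the rest. Fix $i\in\{1,2\}$ and set $\mu=(\nu_1,\nu_2)$ with $f(x)=\frac{d\nu_2}{d\nu_1}(x)=2x$ when $i=1$, and $\mu=(\nu_1,\nu_3)$ with $f(x)=\frac{d\nu_3}{d\nu_1}(x)=\rho(x)$ when $i=2$. In either case $\mu$ is a two-dimensional finite atomless vector measure whose coordinates are equivalent (both densities are strictly positive Lebesgue-a.e.), so Assumption~\ref{ass:abscon} holds for $\mu$. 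Moreover $\mu(X)=(1,1)$, and a direct integration gives $\mu(W^i)=(\tfrac14,\tfrac1{16})$ and $\mu(Z^i)=(\tfrac12,\tfrac12)$, where $W^1=[0,\tfrac14)$ and $W^2=[0,\tfrac18)\cup[\tfrac12,\tfrac58)$. Since $\nu(Z)=p=(\tfrac12,\tfrac12,\tfrac12)$, one also has $\mu(Z)=(\tfrac12,\tfrac12)$.

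\emph{Step 1 (the part of $Z$ realizing $q^i$).} Because $q^i\in R_\nu(Z)$, choose $W\in\mathcal F_Z$ with $\nu(W)=q^i$; then $\mu(W)=(\tfrac14,\tfrac1{16})$. Computing $l_a$ from \eqref{eq:lq} gives $l_{1/4}=\tfrac12$, and the level set $X^{l_{1/4}}=\{x:f(x)=\tfrac12\}$ is a one- or two-point set, hence $\mu_1$-null; thus $L_{1/4}\simeq\{f<\tfrac12\}$, which one checks equals $W^i$. In particular $\mu_2(L_{1/4})=\tfrac1{16}=\mu_2(W)$, so Proposition~\ref{lem:unique} yields $W\simeq W^i$; hence $W^i\subseteq Z$ modulo $\nu$-null sets.

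\emph{Step 2 (the rest of $Z$).} Put $Y:=X\setminus Z$; then $Y\subseteq X\setminus W^i$ modulo null sets and $\mu(Y)=\mu(X)-\mu(Z)=(\tfrac12,\tfrac12)$. The proof of Proposition~\ref{lem:unique}, like those of the lemmas preceding it, uses no special property of $X$, so it applies verbatim with $X$ replaced by the measurable subspace $X\setminus W^i$. On that subspace $l_{1/2}(X\setminus W^i)=\tfrac32$, the set $\{x\in X\setminus W^i:f(x)=\tfrac32\}$ is again finite hence null, $L_{1/2}(X\setminus W^i)\simeq\{x\in X\setminus W^i:f(x)<\tfrac32\}$ equals $X\setminus Z^i$, and $\mu_2(X\setminus Z^i)=\tfrac12=\mu_2(Y)$. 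Therefore $Y\simeq X\setminus Z^i$, i.e. $Z\simeq Z^i$, which is the assertion.

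The only substantive work, and the only place a slip could occur, is the bookkeeping in Steps 1--2: verifying the closed forms $l_{1/4}=\tfrac12$ and $l_{1/2}(X\setminus W^i)=\tfrac32$ and the induced set identities $L_{1/4}\simeq W^i$, $L_{1/2}(X\setminus W^i)\simeq X\setminus Z^i$, together with the values $\mu_2(L_{1/4})=\tfrac1{16}$ and $\mu_2(L_{1/2}(X\setminus W^i))=\tfrac12$, directly from the piecewise formula for $\rho$; checking that every level set $\{f=l\}$ occurring is finite; and tracking ``$\simeq$'' through the complement and set-difference operations used above. I do not expect any conceptual obstacle beyond this; the one idea that is not purely routine is the choice of the coordinate pair — $(\nu_1,\nu_2)$ for $q^1$ and $(\nu_1,\nu_3)$ for $q^2$ — which makes $W^i$ the (essentially unique) extremal set $L_{1/4}$ for the corresponding two-dimensional measure.
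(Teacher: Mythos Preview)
Your proof is correct and follows the same strategy as the paper: pick the coordinate pair $(\nu_1,\nu_2)$ for $i=1$ and $(\nu_1,\nu_3)$ for $i=2$, then apply Proposition~\ref{lem:unique} twice to pin down $Z$ up to null sets. The only organizational difference is in the second application. The paper stays on the full space $X$ and applies Proposition~\ref{lem:unique} to the set $Y=W\cup(X\setminus Z)$, obtaining $\mu(Y)=(\tfrac34,\tfrac{9}{16})$ and hence $Y\simeq L_{3/4}$, from which $Z=W\cup(X\setminus Y)\simeq L_{1/4}\cup(X\setminus L_{3/4})=Z^i$. You instead pass to the subspace $X\setminus W^i$ and apply the proposition there to $X\setminus Z$, identifying it with $L_{1/2}(X\setminus W^i)=X\setminus Z^i$. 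Both routes are equally short; the paper's version avoids restating Proposition~\ref{lem:unique} on a subspace (though the paper has already noted, after Lemma~\ref{lem:Z_star}, that all such lemmas localize), while yours keeps the bookkeeping in Step~2 parallel to Step~1.
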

\begin{proof}
Let $i =1$. Since $q^1 \in R_\nu(Z)$, there exists a set ${W^1}
\in \mathcal{F}_Z$ such that $\nu({W^1}) = q^1$. Define
two-dimensional vector measure
$\mu=\left(\mu_1,\mu_2\right)=\left(\nu_1,\nu_2\right)$. Then
$\mu({W^1}) = \left(\frac{1}{4},\frac{1}{16}\right)$. Observe
that, according to (\ref{eq:lq}) and (\ref{eq:L_a}),
$l_{\mu_1({W^1})}=l_{\frac{1}{4}}=\frac{1}{2}$ and
$L_{\mu_1({W^1})}=L_{\frac{1}{4}}=\left[0,\frac{1}{4}\right)$. In
addition, $\mu_1\left(X^{l_{\mu_1({W^1})}}\right)=0$ and
$\mu_{2}({W^1})=\frac{1}{16}=\mu_{2}\left(L_{\mu_{1}({W^1})}\right)$.
Therefore, according to Proposition~\ref{lem:unique}, ${W^1}
\simeq L_{\mu_{1}({W^1})}=\left[0,\frac{1}{4}\right)$. On the
other hand, let $Y={W^1}\cup (X\setminus Z)$. Since ${W^1}
\subseteq Z$,
$\nu(Y)=\nu({W^1})+(\nu(X)-\nu(Z))=q^1+(\nu(X)-p)=\left(\frac{3}{4},\frac{9}{16},\frac{5}{8}\right)$,
and thus, $\mu(Y)=\left(\frac{3}{4},\frac{9}{16}\right)$. Observe
that, according to (\ref{eq:lq}) and (\ref{eq:L_a}),
$l_{\mu_1(Y)}=l_{\frac{3}{4}}=\frac{3}{2}$ and
$L_{\mu_1(Y)}=L_{\frac{3}{4}}=\left[0,\frac{3}{4}\right)$. In
addition, $\mu_1\left(X^{l_{\mu_1(Y)}}\right)=0$ and
$\mu_{2}(Y)=\frac{9}{16}=\mu_{2}\left(L_{\mu_{1}(Y)}\right)$.
Therefore, according to Proposition \ref{lem:unique}, $Y \simeq
L_{\mu_{1}(Y)}=L_{\frac{3}{4}}$.  Above observations imply that $Z
= {W^1} \cup (X\setminus Y) \simeq L_{\frac{1}{4}} \cup
\left(X\setminus L_{\frac{3}{4}}\right) =
\left[0,\frac{1}{4}\right) \cup \left(\left[0,1\right]\setminus
\left[0,\frac{3}{4}\right)\right) =  \left[0,\frac{1}{4}\right)
\cup \left[\frac{3}{4},1\right]=Z^1$.

Let $i =2$. Since $q^2 \in R_\nu(Z)$, there exists a set ${W^2}
\in \mathcal{F}_Z$ such that $\nu({W^2}) = q^2$. Define
two-dimensional vector measure
$\mu=\left(\mu_1,\mu_2\right)=\left(\nu_1,\nu_3\right)$. Then
$\mu({W^2}) = \left(\frac{1}{4},\frac{1}{16}\right)$. Observe
that, according to (\ref{eq:lq}) and (\ref{eq:L_a}),
$l_{\mu_1({W^2})}=l_{\frac{1}{4}}=\frac{1}{2}$ and
$L_{\mu_1({W^2})}=L_{\frac{1}{4}}=\left[0,\frac{1}{8}\right) \cup
\left[\frac{1}{2},\frac{5}{8}\right)$. In addition,
$\mu_1\left(X^{l_{\mu_1({W^2})}}\right)=0$ and
$\mu_{2}({W^2})=\frac{1}{16}=\mu_{2}\left(L_{\mu_{1}({W^2})}\right)$.
Therefore, according to Proposition~\ref{lem:unique}, ${W^2}
\simeq L_{\mu_{1}({W^2})}=\left[0,\frac{1}{8}\right) \cup
\left[\frac{1}{2},\frac{5}{8}\right)$. On the other hand, let
$Y={W^2} \cup (X\setminus Z)$. Since ${W^2} \subseteq Z$,
$\nu(Y)=\nu({W^2})+(\nu(X)-\nu(Z))=q^2+(\nu(X)-p)=\left(\frac{3}{4},\frac{21}{32},\frac{9}{16}\right)$,
and thus, $\mu(Y)=\left(\frac{3}{4},\frac{9}{16}\right)$. Observe
that, according to (\ref{eq:lq}) and (\ref{eq:L_a}),
$l_{\mu_1(Y)}=l_{\frac{3}{4}}=\frac{3}{2}$ and
$L_{\mu_1(Y)}=L_{\frac{3}{4}}=\left[0,\frac{3}{8}\right) \cup
\left[\frac{1}{2},\frac{7}{8}\right)$. In addition,
$\mu_1\left(X^{l_{\mu_1(Y)}}\right)=0$ and
$\mu_{2}(Y)=\frac{9}{16}=\mu_{2}\left(L_{\mu_{1}(Y)}\right)$.
Therefore, according to Proposition \ref{lem:unique}, $Y \simeq
L_{\mu_{1}(Y)}=L_{\frac{3}{4}}$.  Above observations imply that $Z
= {W^2} \cup (X\setminus Y) \simeq L_{\frac{1}{4}} \cup
\left(X\setminus L_{\frac{3}{4}}\right)
=\left(\left[0,\frac{1}{8}\right) \cup
\left[\frac{1}{2},\frac{5}{8}\right)\right) \cup
\left(\left[0,1\right]\setminus \left(\left[0,\frac{3}{8}\right)
\cup \left[\frac{1}{2},\frac{7}{8}\right)\right)\right) =
\left[0,\frac{1}{8}\right)\cup\left[\frac{3}{8},\frac{5}{8}\right)\cup\left[\frac{7}{8},1\right]=Z^2$.
\end{proof}
\section{Geometric construction of maximal ranges}\label{s5}

In \cite{Liapounoff:1940}, Lyapunov commented that a subset of the
two-dimensional Euclidean space $\mathbb{R}^2$ is the range of
some two-dimensional finite atomless vector measure on some
measurable space if and only if it satisfies the following
conditions: (1) it is convex; (2) it is closed; (3) it is
centrally symmetric; (4) it contains the origin. Since the
geometrically constructed set $Q_\mu^p(X)$ satisfies the
conditions (1)-(4), it must be the range of some two-dimensional
finite atomless vector measure on some measurable space.
Theorem~\ref{thm1} immediately tells us that it is the range of
the vector measure $\mu$ on the measurable space
$\left(Z^*,\mathcal{F}_{Z^*}\right)$. The second equality in
Theorem~\ref{thm1} allows us to construct geometrically the set
$R_\mu^p(x)$ by shifting the set $R_\mu(X)$ by
$\left(p-\mu(X)\right)$ and intersecting the shifted set with
$R_\mu(X)$.

We consider three examples with the same set $X=[0,1]$, but with
different probability vector measures. Let $p=(0.7,0.8)$ in all
these examples.
%
\begin{figure}[ht]
\begin{center}
\includegraphics[scale=0.55]{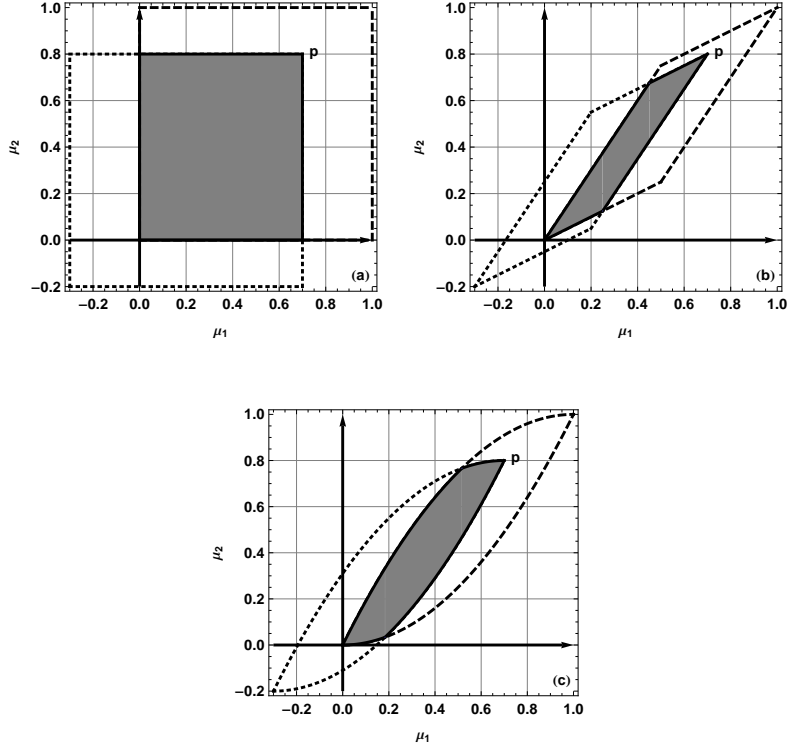}\label{fig:FigTot}
\caption{Plots (a)-(c) present the maximal subsets for the vector measures described in Examples \ref{Exa:Singular}-\ref{Exa:Linear}, respectively, with $p=(0.7,0.8)$. The area enclosed by the dashed lines is the range $R_\mu(X)$.
The area enclosed by the dotted lines are obtained by parallelly shifting
the dashed area by $\left(-0.3,-0.2\right)$. The shaded areas
are intersection of the above two areas and represents the identical
sets ${R}_\mu\left({Z}^{*}\right)$, ${R}^{{p}}_\mu\left({X}\right)$
and $Q^{{p}}_{\mu}\left({X}\right)$.}
\end{center}
\end{figure}
\begin{example}\label{Exa:Singular}
Let $\mu_{1}$ and $\mu_{2}$ be singular. Then the range $R_\mu(X)$ is the unit square enclosed by the dashed lines in Fig.~1(a). The shaded area denotes the identical sets ${R}_\mu\left({Z}^{*}\right)$,
${R}^{{p}}_\mu\left({X}\right)$ and $Q^{{p}}_{\mu}\left({X}\right)$
with ${p}=\left(0.7,0.8\right)$.
\end{example}

\begin{example}\label{Exa:Constant}
Consider the vector measure ${\mu}\left(dx\right)=\left(\mu_{1},\mu_{2}\right)\left(dx\right)=\left({1,f\left(x\right)}\right)dx$, where
\[
f\left(x\right)=\begin{cases}
\frac{1}{2}, & \textrm{if } x\in\left[0,\frac{1}{2}\right);\\
\frac{3}{2}, & \textrm{if } x\in\left[\frac{1}{2},1\right].\end{cases}
\]
Then the range of $R_\mu(X)$ is the area enclosed by
the dashed lines in Fig.~1(b). The shaded area denotes the three
identical sets ${R}_\mu\left({Z}^{*}\right)$, ${R}^{{p}}_\mu\left({X}\right)$
and $Q^{{p}}_{\mu}\left({X}\right)$ with ${p}=\left(0.7,0.8\right)$.
\end{example}

\begin{example}\label{Exa:Linear}
Let ${\mu}\left(dx\right)=\left(\mu_{1},\mu_{2}\right)\left(dx\right)=\left({1,2x}\right)dx$.
Then the range $R_{\mu}(X)$ is the area enclosed by
the dashed lines in Fig.~1(c). The shaded area denotes the three
identical sets ${R}_\mu\left({Z}^{*}\right)$, ${R}^{{p}}_\mu\left({X}\right)$,
and $Q^{{p}}_{\mu}\left({X}\right)$ when ${p}=\left(0.7,0.8\right)$.
%
%
%
%
%
%
%
\end{example}

\section{Proof of Theorem~\ref{thm2}}\label{s6}

 For any $B \subseteq A$, denote
$p(B)=\sum_{a \in B}p^a$, where either  $A=\{1,2,\ldots\}$ or $A=\{1,\ldots,n\}$ for some $n=1,2,\ldots\ .$  
\begin{lemma} \label{lem:PartialSum}
Let $\mu=\left(\mu_1,\mu_2\right)$ be a two-dimensional finite
atomless measure. If $p(B) \in R_\mu(X)$ for all $B \subset A$ and
$\sum_{a \in A} {p^a} = \mu (X)$, then there exists a partition
$\{Z^a\in {\mathcal{F}}:a\in A\}$ of $X$, such that $p^a=\mu
(Z^a)$ for each $a\in{A}$.
\end{lemma}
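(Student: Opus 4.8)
The plan is to peel the sets $Z^a$ off $X$ one at a time, at each step removing the \emph{minimal} subset (in the sense of Definition~\ref{def:maxset}) of what remains having the prescribed measure, so that the leftover keeps a range large enough to host all the vectors not yet used. I would enumerate $A$ as $\{1,2,\dots\}$ (for $A=\{1,\dots,n\}$ the recursion below simply stops), set $Y_0=X$, and run an induction maintaining the hypothesis that $Y_k\in\mathcal F$ satisfies $\mu(Y_k)=\sum_{i>k}p^i$ and $p(B)\in R_\mu(Y_k)$ for every finite $B\subseteq\{k+1,k+2,\dots\}$; for $k=0$ this is precisely hypotheses (i) and (ii). Given $Y_k$, the restriction of $\mu$ to $(Y_k,\mathcal F_{Y_k})$ is again a two-dimensional finite atomless vector measure and $p^{k+1}\in R_\mu(Y_k)$ (the case $B=\{k+1\}$), so the corollary on minimal subsets in Section~\ref{s3}, applied to $Y_k$, provides the minimal subset $Z^{k+1}\subseteq Y_k$ with $\mu(Z^{k+1})=p^{k+1}$; I would set $Y_{k+1}=Y_k\setminus Z^{k+1}$, which by Theorem~\ref{thm1.5} applied to $Y_k$ is the maximal subset of $Y_k$ with measure $\mu(Y_k)-p^{k+1}$.

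The point that makes the induction close is that maximality of $Y_{k+1}$ pins down its range: by Theorem~\ref{thm1} and Definition~\ref{def:sets}(c),
\[
R_\mu(Y_{k+1})=R_\mu^{\mu(Y_k)-p^{k+1}}(Y_k)=Q_\mu^{\mu(Y_k)-p^{k+1}}(Y_k)=\bigl(R_\mu(Y_k)-\{p^{k+1}\}\bigr)\cap R_\mu(Y_k).
\]
Hence, for a finite $B\subseteq\{k+2,k+3,\dots\}$, the relation $p(B)\in R_\mu(Y_{k+1})$ is equivalent to $p(B)\in R_\mu(Y_k)$ together with $p(B)+p^{k+1}=p(B\cup\{k+1\})\in R_\mu(Y_k)$, and both hold by the inductive hypothesis because $B$ and $B\cup\{k+1\}$ are finite subsets of $\{k+1,k+2,\dots\}$. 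Since also $\mu(Y_{k+1})=\mu(Y_k)-p^{k+1}=\sum_{i>k+1}p^i$, the hypothesis is restored at level $k+1$ and the recursion continues.

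Finally I would assemble the partition. The $Z^k$ are pairwise disjoint because $Z^{k+1}\subseteq Y_k$ and the $Y_k$ decrease, and $Y_\infty:=\bigcap_{k\ge1}Y_k=X\setminus\bigcup_{k\ge1}Z^k$. As each $\mu_j$ is finite, $\mu_j(Y_\infty)=\lim_k\mu_j(Y_k)$, and $\mu(Y_k)=\sum_{i>k}p^i\to 0$ because $\sum_i p^i$ converges to $\mu(X)$; hence $\mu(Y_\infty)=0$. Replacing $Z^1$ by the disjoint union $Z^1\cup Y_\infty$ (which does not change its measure) makes $\{Z^a:a\in A\}$ a partition of $X$ with $\mu(Z^a)=p^a$ for all $a$. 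For finite $A=\{1,\dots,n\}$ the recursion ends at $Y_n$ with $\mu(Y_n)=\mu(X)-\sum_{i=1}^n p^i=0$, and the same absorption step (or simply taking $Z^n=Y_{n-1}$) finishes the proof.

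I expect the main obstacle to be the selection at each stage: a careless choice of a measure-$p^{k+1}$ subset of $Y_k$ would in general shrink $R_\mu(Y_{k+1})$ too far, whereas removing the minimal subset forces $R_\mu(Y_{k+1})$ to be exactly the set of $q$ with $q,q+p^{k+1}\in R_\mu(Y_k)$ — precisely the invariant the induction must carry. The only other delicate point is the limit, i.e. that $Y_\infty$ is $\mu$-null, which follows from finiteness of $\mu$ and convergence of $\sum_i p^i$.
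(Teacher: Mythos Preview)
Your argument is correct and is essentially the paper's own proof: at each stage you keep the maximal subset $Y_{k+1}$ of $Y_k$ with measure $\mu(Y_k)-p^{k+1}$ and use $R_\mu(Y_{k+1})=Q_\mu^{\mu(Y_k)-p^{k+1}}(Y_k)$ to propagate the invariant, exactly as the paper does (the detour through the minimal subset $Z^{k+1}$ is equivalent via Theorem~\ref{thm1.5}). You are in fact slightly more careful than the paper, which simply writes ``repetition of this procedure generates the desired partition'' without spelling out that the leftover $Y_\infty$ is $\mu$-null and can be absorbed into $Z^1$.
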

\begin{proof}
Consider $p=\mu(X)-p^1$. According to Theorem \ref{thm1}, there
exists a maximal subset $Z^*\in\mathcal{S}_\mu^{p}(X)$ and
$R_\mu\left(Z^*\right) = Q_\mu^{p}(X)$.  Let $Z^1 = X\setminus
Z^*$, $X^1=Z^*$, and $A^1 = A \setminus \{1\}$. Note that
$p^1=\mu(Z^1)$ and $p(B) \in R_\mu\left(X^1\right)$ for all $B
\subseteq A^1$. Indeed, ${p}(B) + {p^1}=p(B\cup \{1\}) \in R_\mu
(X)$.  Thus, $p(B) \in R_\mu (X) - \{(\mu(X) - p)\}$, and in
addition ${p}(B) \in R_\mu (X)$. Therefore, $p(B) \in Q_\mu^{p}(X)
= R_\mu\left(X^1\right)$.

Now for $p^2 \in \left\{p^a:a \in A^1\right\}$  there exists a
maximal set $Z^*\in\mathcal{S}_\mu^{p}(X^1)$, where
$p=\mu(X^1)-p^2$. Let $Z^2 = X^1 \setminus Z^*$, $X^2=Z^*$, and
$A^2 = A^1 \setminus \{2\}$, then $p^2=\mu(Z^2)$ and $p(B) \in
R_\mu\left(X^2\right)$ for all $B \subseteq A^2$. The repetition
of this procedure generates the desired partition
$\left\{Z^a\in\mathcal{F}: a \in A\right\}$.
\end{proof}

\begin{proof}[Proof of Theorem 2.5]
The necessity is obvious. For the sufficiency, in view of Lemma
\ref{lem:PartialSum}, it is sufficient to prove that condition
(ii) implies $p (B) \in R_\mu (X)$ for all $B\subseteq A$.  If $B$
is finite, condition (ii) implies $p(B) \in R_\mu (X)$. If $B$ is
infinite, let $B = \left\{a^1,a^2,\dots \right\}$ and $B_n =
\left\{a^1,a^2,\dots a^n\right\}$, $n=1,2,\ldots\ .$ Then $p(B) =
\lim_{n \rightarrow \infty} { p(B_n)}$ and $p(B_n) \in R_\mu (X)$
for $n=1,2,\dots$, according to condition (ii). Since $R_\mu (X)$
is closed, $p(B) \in R_\mu (X)$.
\end{proof}
%
%

Finally we show that, when $m=2$, the Dvoretzky-Wald-Wolfowitz
purification theorem for a countable image set $A$
\cite{Edwards:1987,Khan:2009}  is a particular case of
Theorem~\ref{thm2}. Let $p^a=\int_{{X}}\pi\left(a|x\right)\mu
\left(dx\right)$, $a\in A$. If these vectors $p^a$ satisfy
conditions (i) and (ii) of Theorem~\ref{thm2}, then
Theorem~\ref{thm2} implies that transition probability can be
purified in the case of countable  $A$ and $m=2$. Indeed, for (i),
obviously $\sum_{a \in A} p_a = \mu(X)$. For (ii),  if $B
\subseteq A$ then
\[
\sum_{a \in B} {p^a}={\sum_{a \in B} {\int_{{X}}\pi\left(a|x\right)\mu \left(dx\right)}}
=\int_{{X}}\pi\left(B|x\right)  \mu \left(dx\right)\in R_\mu(X),
\]
where the inclusion follows from a version of Lyapunov's theorem
\cite[p. 218]{Barra:1981}.

\bibliographystyle{amsplain}

\end{document}